\documentclass{amsart}

\usepackage[utf8]{inputenc}
\usepackage[T1]{fontenc}

\usepackage{url,textcmds,amssymb}
\usepackage{mathtools,amscd,enumerate}
\usepackage{tikz}
\usepackage[main=british]{babel}

\usetikzlibrary{patterns}

\newtheorem{theorem}{Theorem}[section]
\newtheorem{proposition}[theorem]{Proposition}
\newtheorem{lemma}[theorem]{Lemma}

\theoremstyle{definition}

\theoremstyle{remark}
\newtheorem{remark}[theorem]{Remark}
\newtheorem{example}[theorem]{Example}
\newtheorem*{acknowledgement}{Acknowledgement}

\newcommand{\itref}[1]{\textup{(\ref{#1})}}

\newcommand{\lie}[1]{\mathfrak{#1}}

\newcommand{\n}{\lie n}

\newcommand{\tf}{\lie t}
\newcommand{\z}{\lie z}

\newcommand{\bC}{\mathbb C}
\newcommand{\bH}{\mathbb H}

\newcommand{\bL}{\mathbb L}
\newcommand{\bN}{\mathbb N}
\newcommand{\bP}{\mathbb P}
\newcommand{\bR}{\mathbb R}
\newcommand{\bZ}{\mathbb Z}

\newcommand{\C}{\bC}
\newcommand{\R}{\bR}

\newcommand{\hkq}{{/\mkern-3mu/\mkern-3mu/}}

\DeclarePairedDelimiter{\abs}{\lvert}{\rvert}
\DeclarePairedDelimiterX{\inp}[2]{\langle}{\rangle}{#1, #2} 
\DeclarePairedDelimiter{\norm}{\lVert}{\rVert}

\newcommand{\with}{}
\newcommand{\SetSymbol}[1][]{\nonscript\:#1\vert
  \allowbreak\nonscript\:\mathopen{}}
\DeclarePairedDelimiterX{\Set}[1]{\{}{\}}{%
  \renewcommand{\with}{\SetSymbol[\delimsize]}
  #1 }

\DeclareMathOperator{\imp}{Im} %imaginary part
\DeclareMathOperator{\imm}{im} %image of function

\DeclareMathOperator{\nLie}{Lie}

\DeclareMathOperator{\stab}{stab}

\newcommand{\iH}{\imp\bH}

\begin{document}

\title{Hypertoric manifolds of infinite topological type}

\author{Andrew Dancer}
\address[Dancer]{Jesus College\\
Oxford, OX1 3DW\\
United Kingdom} \email{dancer@maths.ox.ac.uk} 

\begin{abstract}
We analyse properties of hypertoric manifolds of infinite topological 
type, including their topology and complex structures. We show that our
manifolds have the homotopy type of an infinite union of compact 
toric varieties. We also discuss hypertoric analogues of the
periodic Ooguri-Vafa spaces.
\end{abstract}

\maketitle

\section{Introduction}
\label{sec:introduction}

A hypertoric manifold is a hyperk\"ahler manifold of real
dimension $4n$ with a tri-Hamiltonian action of a torus $T^n$ 
of dimension $n$. (In the literature completeness is often assumed, but
in some cases we will drop this assumption).
Complete examples were systematically constructed and
 analysed in \cite{Bielawski:tri-Hamiltonian},
 \cite{BD} in the case
when they have finite topological type, meaning that all
Betti numbers are finite. These particular examples
 generalise the four-dimensional
Gibbons-Hawking spaces \cite{Gibbons-H:multi} and the higher-dimensional 
examples of Goto \cite{Goto}, and retract onto a finite union of
toric varieties.

Recently, the complete four-dimensional examples, without restriction on their topology, were classified in \cite{Swann:twist-mod}. Examples of infinite topological
type had been given by 
Anderson,Kronheimer and LeBrun \cite{AndersonMT-KL:infinite} 
and have also been studied by Hattori \cite{Hattori:Ainfty-volume}, building
on work of Goto \cite{Goto:A-infinity}, that also included examples
in higher dimensions. The approach of Goto was generalised in a  systematic
construction in higher dimensions by the
author and Swann in our paper \cite{DS-SMS}, using
hyperK\"ahler quotients of certain Hilbert manifolds. This was then sufficient
to obtain the full classification of complete hypertoric manifolds by combining
the results of \cite{Bielawski:tri-Hamiltonian}, \cite{Swann:twist-mod}.

%In this paper we shall make a systematic study of hypertoric manifolds
%of infinite topological type. A method to construct examples by
%hyperK\"ahler quotient constructions involving an infinite-dimensional
%group was announced
%by the authors at the conference r Simon Salamon's 60th birthday, held in 
%Rome in November 2015 \cite{DS-SMS}. 

As in the finite case, much of the geometry and topology is
encoded by a configuration of codimension 3 affine subspaces ({\em
flats}) in $\bR^{3n}$, generalising the points in $\bR^3$ that are the
centres of the Gibbons-Hawking metrics. In
\cite{BD} there were only finitely many such
flats, but we shall now consider the situation where there are
infinitely many of them. These need to be
 spaced out suitably to make certain sums
converge.  

The purpose of this paper is to further analyse the geometries obtained
in the case of infinite topological type. In particular, we describe their
homotopy type and their structure as complex manifolds.
The topology is now generated by an infinite collection of
compact toric varieties of restricted types. The $T^n$ action still induces a moment map
that surjects onto $\bR^{3n}$. The fibres are quotient tori ($T^n$ for generic
fibres), whose dimension is controlled by the intersection properties
of the flats.  The Pedersen-Poon generalisation of the Gibbons-Hawking
calculations enables us to write the metric in terms of a generalised
monopole.  The explicit formulae now involve an infinite sum involving
the distances from the flats.  

We also briefly discuss generalisations
of the 4-dimensional (incomplete) periodic Ooguri-Vafa metric to the
hypertoric set-up.

\section{Construction of hypertoric manifolds}
\label{sec:constr-hypert-manif}

We begin by reviewing our construction from \cite{DS-SMS} which produces
certain hypertoric manifolds $M(\beta, \lambda)$ 
(including examples of infinite topological type)
via a hyperk\"ahler quotient of a flat Hilbert manifold.

We let \( \bL \) denote a finite or  
countably infinite set and let $\bH = \bR^{4}$ be the quaternions.
  Given
\( \Lambda = (\Lambda_k)_{k\in\bL} \in \bH^\bL \), we define
\( \lambda = (\lambda_k)_{k\in \bL} \in {\rm Im} \; \bH^\bL\) by
\( \lambda_k = - \tfrac12\overline\Lambda_k i \Lambda_k \in \iH \).
Put
  \( L^2(\bH) = \Set[\Big]{ v \in
  \bH^{\bL} \with \sum_{k\in\bL} \abs{v_k}^2 < \infty} \) and equip the 
 Hilbert manifold \( M_\Lambda = \Lambda + L^2(\bH) \),
with the flat hyperKähler structure induced by the
\( L^2 \)-metric and the complex structures obtained by regarding
\( L^2(\bH) \) as a right \( \bH \)-module.

For the following
construction, we require $\lambda$ to satisfy the  growth condition
\begin{equation}
  \label{eq:convergence}
  \sum_{k\in \bL}(1 + \abs{\lambda_k})^{-1} < \infty.
\end{equation}

Consider the Hilbert group
\begin{equation*}
  T_\lambda = \Set[\Big]{ g \in T^{\bL} =
  (S^1)^{\bL} \with \sum_{k\in\bL} (1 +
  \abs{\lambda_k})\, \abs{1-g_k}^2 < \infty}.
\end{equation*}

%Now \( g \in T_\lambda \) implies that given $\epsilon>0$ we can find a
%finite subset $\bL_{0}$ of $\bL$ so that
%\( \abs{g_k - 1} < \epsilon \) for \( k \notin \bL_{0} \).  
%So we can assume that $g_k = \exp(t_k)$ where
%$t_k$ is in an arbitrarily small neighbourhood
%of zero for $k$ outside some finite set.  
%\( \abs{1-\exp(it)}^2 = 2 - 2\cos(t) \leqslant t^2 \) for all
%\( t\in\bR \) and \( \abs{1-\exp(it)}^2 \geqslant 2t^2/\pi^2 \) on
%\( (-\pi/2,\pi/2) \)

\noindent whose Lie algebra ${\lie t}_{\lambda}$  consists of
those $t \in \bR^\bL$ such that\\
$\sum_{k\in\bL} (1 + \abs{\lambda_k})\, \abs{t_k}^2$ is finite.
Note that $|g_k h_k -1| \leq |(g_k-1)(h_k-1)| + |g_k-1| + |h_k -1 |$
and $|g_k^{-1} - 1| = |g_k|^{-1} | g_k - 1|$, so as $g_k \rightarrow 1$
as $|k| \rightarrow \infty$, the group axioms do indeed hold for
$T_\lambda$.

The group \( T_\lambda \) acts on \( M_\Lambda \) via
\( gx = (g_kx_k)_{k\in\bL} \). To see this,
observe that for \( g \in T_\lambda \) and
\( x = \Lambda + v \in M_\Lambda \), we have
\( gx = g\Lambda + gv = \Lambda - (1-g)\Lambda + gv \), but
\( gv \in L^2(\bH) \), since $g_k \rightarrow 1$ as $|k| \rightarrow \infty$,
 and \( \norm{(1-g)\Lambda}^2 = \sum_{k\in\bL}
\tfrac12\abs{\lambda_k}\abs{1-g_k}^2 \leqslant \tfrac12 \sum_{k\in\bL}
(1+\abs{\lambda_k}) \abs{1-g_k}^2 \), which is finite by the
definition of~\( T_\lambda \), so \( (1-g)\Lambda \in L^2(\bH) \)
too.  The action preserves the flat hyperKähler structure 
on $M_\Lambda$, and has moment map $\mu_{\Lambda} : M_{\Lambda}
\rightarrow {\rm Im} \; \bH \otimes {\lie t}_{\lambda}^*$ given by
\begin{equation*}
  \mu_\Lambda(x) = \sum_{k\in\bL} \bigl(\lambda_k + \tfrac12\overline
  x_k\mathbf ix_k \bigr)e_k^*.
\end{equation*}
where \( e_i = (\delta_i^k)_{k\in\bL} \) with
 \( \delta_i^k \)  Kronecker's delta and $e_k^*(e_i) = \delta_i^k$.
The choice of constant $\lambda_k$ is to cancel the quadratic
terms in $\Lambda$ when we write $x = \nu + \Lambda$ with $\nu
\in L^2 (\bH)$. The cross terms that are linear in $\Lambda$
give a convergent sum of terms such as $\bar{v}_k \mathbf i t_k \Lambda_k$
when paired with $t \in \tf_{\lambda}$, as both $(\nu_k)$ and $(t_k \Lambda_k)$
are square-summable.

\medskip
Our hypertoric manifolds are produced as hyperk\"ahler quotients
of $M_{\Lambda}$ by finite-codimension subtori of $T_{\Lambda}$, defined
as follows.
For each \( k \in \bL \), we let \( u_k \in \bZ^n \) be a non-zero 
{\em primitive} vector, meaning that for all integers
$m>1$, we have $u_{k} \notin m \bZ^n$.
We also assume that $\{u_k : k \in \bL\}$ spans $\bR^n$.
Now consider the linear map
\( \beta \colon \lie t_\lambda \to \bR^n \) given by
\( \beta(e_k) = u_k \), 

\medskip
 Supposing \( \beta \)~is continuous, then we define
\( \lie n_\beta = \ker \beta \subset \lie t_\lambda\), so
\( \lie n_\beta \) is a closed subspace of \( \lie t_{\lambda} \).  
(We shall shortly give sufficient conditions for continuity of $\beta$). As
we are taking the \( u_k \) to be integral, we may
define a Hilbert subgroup \( N_\beta \) of~\( T_\lambda \) by
\begin{equation*}
  N_\beta = \ker(\exp \circ \beta \circ \exp^{-1} \colon T_\lambda \to
  T^n).
\end{equation*}
This gives exact sequences
\begin{gather}
  \label{eq:ex-lie}
  \begin{CD}
    0 @>>> \lie n_\beta @>\iota>> \lie t_\lambda @>\beta>> \bR^n @>>>
    0,
  \end{CD}\\
  \label{eq:ex-lie-dual}
  \begin{CD}
    0 @>>> (\bR^n)^{*} @> \beta^* >> \lie t_\lambda^{*} @> \iota^* >> \lie n_\beta^* @>>> 0,
  \end{CD}
\end{gather}
and also the exact sequence of groups 
\[
1 \rightarrow N_{\beta} \rightarrow T_{\lambda} \rightarrow T^n 
= T_{\lambda}/N_{\beta} \rightarrow 1.
\]
The hyperKähler moment map for the subgroup \( N_\beta \) is then
\( \mu_\beta = \iota^*\mu_\Lambda \colon M_\Lambda \to \iH \otimes
\lie n_\beta^* \).  We define the hyperk\"ahler quotient
\begin{equation*}
  M = M(\beta,\lambda) := M_\Lambda \hkq N_\beta = \mu_\beta^{-1}(0)
  / N_\beta.
\end{equation*}

Since \eqref{eq:ex-lie-dual} is exact, we have
\( \ker\iota^* = \imm\beta^* \). Now $\beta^*(s) (e_k) = s(\beta(e_k)) =
s(u_k)$ for $s \in (\R^n)^*$, so $\beta^*(s) = \sum s(u_k) e_k^*$.
 Hence a point \( x \in M_\Lambda \) lies in the zero set of the
  hyperKähler moment map~\( \mu_\beta \) for~\( N_\beta \) if and only
  if there is an \( a \in \iH \otimes (\bR^n)^* \) with
  \begin{equation}
    \label{eq:im-beta}
    a(u_k) = \lambda_k + \tfrac12\overline x_k\mathbf ix_k
  \end{equation}
  for each \( k \in \bL \), where \( u_k = \beta(e_k) \).  \qed

 As in
\cite{BD}, the codimension 3 affine subspaces
\( H_k \subset \iH \otimes (\bR^n)^* \cong \bR^{3n} \) 
\begin{equation}
  \label{eq:flat}
  H_k = H(u_k,\lambda_k)
  = \Set{a \in \iH \otimes (\bR^n)^* \with a(u_k) = \lambda_k}
\end{equation}
are called \emph{flats}. Equation(\ref{eq:im-beta}) shows that
 \( x_k = 0 \) if and only if
\( a(u_k) = \lambda_k \), that is, if and only if $a \in H_k$.

%Indeed, the map \( \bH \to \iH \),
%\( v \mapsto \overline v\mathbf iv \), is surjective with
%\( \abs{\overline v\mathbf iv} = \abs v^2 \). The fibre over the
%origin is just zero, while the fibres over nonzero points
%are circles (in fact this map gives the Hopf fibration over
%spheres in $\iH$).
The following result was proved by the author and Swann in \cite{DS-SMS}.
\begin{theorem}
  \label{thm:construction} \cite{DS-SMS}
  Suppose \( u_k = \beta(e_k) \in \bZ^n \), \( k \in \bL \), are
  primitive and span~\( \bR^n \).  Let \( \lambda_k \in \iH \),
  \( k \in \bL \), be given such that the convergence
  condition~\eqref{eq:convergence} holds and the flats
  \( H_k = H(u_k,\lambda_k) \), \( k \in \bL \), are distinct.  Then
  the hyperKähler quotient \( M = M(\beta,\lambda) \) is smooth 
 and of dimension $4n$ if
  \begin{enumerate}[\upshape (a)]
  \item\label{item:a} any set of \( n+1 \) flats~\( H_k \) has empty
    intersection, and
  \item\label{item:b} whenever \( n \) distinct flats
    \( H_{k(1)},\dots,H_{k(n)} \) have non-empty intersection the
    corresponding vectors \( u_{k(1)},\dots,u_{k(n)} \) form a
    \( \bZ \)-basis for~\( \bZ^n \).
  \end{enumerate}
\end{theorem}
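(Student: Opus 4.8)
The plan is to realise $M$ through the hyperKähler quotient construction in the Hilbert-manifold category, where smoothness reduces to two standard conditions: that $0$ is a regular value of $\mu_\beta$, so that $\mu_\beta^{-1}(0) \subset M_\Lambda$ is a smooth Hilbert submanifold, and that $N_\beta$ acts freely and properly on $\mu_\beta^{-1}(0)$. Granting these, the quotient is a smooth Hausdorff manifold carrying an induced hyperKähler structure, and the only separate issue is that its dimension is the finite number $4n$ despite $M_\Lambda$ and $N_\beta$ both being infinite-dimensional. I would treat regularity and freeness first, since both are governed by the same local data, and then extract the dimension from an explicit local model.

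Fix $x \in \mu_\beta^{-1}(0)$ and let $a \in \iH \otimes (\bR^n)^*$ be the element supplied by~\eqref{eq:im-beta}. The decisive object is the vanishing set $Z = \Set{k \in \bL \with x_k = 0} = \Set{k \in \bL \with a \in H_k}$, the set of flats through~$a$. Hypothesis~(\ref{item:a}) gives $\abs{Z} \leqslant n$; this \emph{finiteness} is what keeps the analysis under control, since only finitely many coordinates can vanish. As $g_k x_k = x_k$ forces $g_k = 1$ whenever $x_k \neq 0$, the stabiliser of $x$ in $T_\lambda$ is the compact torus $T^Z$ in the vanishing coordinates, with Lie algebra $\lie t^Z = \Set{t \in \lie t_\lambda \with t_k = 0 \text{ for } k \notin Z}$.

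I would then convert the two conditions into statements about $\Set{u_k \with k \in Z}$. The annihilator of $\imm d\mu_\beta$ at $x$ is the Lie algebra of the $N_\beta$-stabiliser, namely $\lie n_\beta \cap \lie t^Z = \Set{t \with \mathrm{supp}\,t \subseteq Z,\ \sum_{k \in Z} t_k u_k = 0}$, so (modulo checking that the range is closed, an analytic point in the Hilbert setting) $0$ is a regular value precisely when the $u_k$, $k \in Z$, are linearly independent. Likewise $\stab_{N_\beta}(x) = N_\beta \cap T^Z = \ker(T^Z \to T^n)$, the homomorphism being induced by the $u_k$, and this kernel is trivial exactly when $\Set{u_k \with k \in Z}$ extends to a $\bZ$-basis of $\bZ^n$. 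The combinatorial content of the theorem is therefore that, for every occurring vanishing set $Z$, the vectors $\Set{u_k \with k \in Z}$ extend to a $\bZ$-basis: the extremal case $\abs{Z} = n$ is exactly~(\ref{item:b}), the case $\abs{Z} = 1$ is primitivity of the $u_k$, and the intermediate cases follow from~(\ref{item:a}) and~(\ref{item:b}) by the same unimodularity argument as in the finite configurations of~\cite{BD}.

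For the dimension I would exhibit a local slice. Away from all flats, $Z = \emptyset$ and a point of $\mu_\beta^{-1}(0)$ is determined by $a \in \bR^{3n}$, which pins down each $\tfrac12 \overline x_k \ii x_k = a(u_k) - \lambda_k$, together with a phase on each resulting circle $\Set{x_k \with \tfrac12 \overline x_k \ii x_k = a(u_k) - \lambda_k}$; the array of phases forms a single $T_\lambda$-coset, and passing to the quotient by $N_\beta$ collapses it onto the residual torus $T_\lambda / N_\beta = T^n$. This yields a local identification $M \cong \bR^{3n} \times T^n$ of dimension $4n$, and at a point with $Z \neq \emptyset$ the same bookkeeping replaces the vanishing factors by the finite-dimensional hyperKähler quotient $\bH^{\abs{Z}} \hkq \ker(T^Z \to T^n)$, which is smooth of real dimension $4\abs{Z}$ precisely by the basis condition above. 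I expect the main obstacle to lie not in this combinatorics but in the infinite-dimensional analysis: verifying that $d\mu_\beta$ has closed, split range so that the level set is a genuine submanifold, that the $N_\beta$-action is proper in the Hilbert-group topology, and that the transverse slice is genuinely finite-dimensional. All three rest on the convergence condition~\eqref{eq:convergence} and the $L^2$ structure of $M_\Lambda$, which are exactly what control the infinite tail of coordinates on which $N_\beta$ acts.
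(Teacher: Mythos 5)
Your outline of the finite\hyphen dimensional content is sound and matches the mechanism the paper records: the stabiliser of $x$ in $T_\lambda$ is the torus supported on $Z=\{k : x_k=0\}$, condition \itref{item:a} bounds $\abs{Z}$ by $n$, and the argument following Proposition~\ref{finite} shows that any occurring $Z$ sits inside a maximal intersecting set $J$ with $\{u_k : k\in J\}$ a $\bZ$-basis, so $\ker(T^Z\to T^n)$ is trivial, the $u_k$, $k\in Z$, are independent, and $N_\beta$ acts freely on $\mu_\beta^{-1}(0)$ with trivial infinitesimal stabilisers. That is exactly the paper's statement that ``the two conditions ensure that $N_\beta$ acts freely on $\mu_\beta^{-1}(0)$''.

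There is, however, a genuine gap at precisely the point you defer. You write that the closed-range, properness and finite-slice issues ``rest on the convergence condition~\eqref{eq:convergence} and the $L^2$ structure of $M_\Lambda$'', and that is not sufficient. The missing idea --- which the paper isolates as Proposition~\ref{finite} and flags as ``useful in its own right'' --- is that condition \itref{item:b} alone forces the set $\mathcal{U}=\{u_k : k\in\bL\}$ of \emph{distinct} direction vectors to be finite (at most $3^n-1$ elements, via the unimodularity argument expressing each $u_k$ with coefficients in $\{-1,0,1\}$ relative to a $\bZ$-basis drawn from $\mathcal{U}$). Only the resulting bound $\sup_k\norm{u_k}<\infty$ makes $\beta\colon\lie t_\lambda\to\bR^n$ a bounded operator: one needs $\sum_k \norm{u_k}^2(1+\abs{\lambda_k})^{-1}<\infty$, and \eqref{eq:convergence} does not deliver this if $\norm{u_k}$ grows. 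Without continuity of $\beta$, the subspace $\lie n_\beta=\ker\beta$ need not be closed of codimension $n$ and $N_\beta$ is not a Hilbert subgroup, so neither your local slice nor the dimension count $4n$ is available. A secondary omission of the same flavour: your generic local model $\bR^{3n}\times T^n$ requires that the fibre of $\mu_\Lambda$ over $\beta^*(a)$ inside $M_\Lambda$ be a single $T_\lambda$-orbit, and verifying that the relating phases $g$ satisfy $\sum_k(1+\abs{\lambda_k})\abs{1-g_k}^2<\infty$ again uses $\abs{\lambda_k}\to\infty$ together with the boundedness of $\mathcal{U}$.
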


The two conditions ensure that
 $N_{\beta}$ acts freely on $\mu_{\beta}^{-1}(0)$. Note that we have
a residual hyperK\"ahler action of $T^n = T_{\Lambda}/N_{\beta}$
on the $4n$-manifold $M(\beta, \lambda)$, justifying the terminology
{\em hypertoric}.

The proof involved the following result, which is useful in its own right.

\begin{proposition} \label{finite} Suppose \( u_k \in \bZ^n \),
  \( k\in\bL \), are primitive, span~\( \bR^n \) and satisfy
  condition~\itref{item:b} of Theorem~\ref{thm:construction}.  Then
  \( \mathcal U = \Set{u_k\with k\in\bL} \) is finite.
\end{proposition}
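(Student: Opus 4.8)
The plan is to distil the purely combinatorial content out of condition~\itref{item:b} and then run an elementary unimodular-replacement argument. The key observation is that linear independence of the direction vectors already forces the corresponding flats to meet. Indeed, regarding \( a \in \iH \otimes (\bR^n)^* \) as a linear map \( \bR^n \to \iH \), the system \( a(u_{k(j)}) = \lambda_{k(j)} \), \( j = 1,\dots,n \), prescribes the values of \( a \) on the vectors \( u_{k(1)},\dots,u_{k(n)} \); if these are linearly independent over \( \bR \) they extend to a basis, so a solution \( a \) exists and \( \bigcap_{j} H_{k(j)} \neq \varnothing \). Since the flats are assumed distinct, condition~\itref{item:b} then applies and makes \( u_{k(1)},\dots,u_{k(n)} \) a \( \bZ \)-basis of \( \bZ^n \). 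Thus~\itref{item:b} upgrades to the statement that \emph{any} \( n \) linearly independent vectors among the \( u_k \) form a \( \bZ \)-basis of \( \bZ^n \), a property referring only to the \( u_k \) and not to the \( \lambda_k \).

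Next I would invoke the spanning hypothesis to select \( n \) linearly independent vectors among the \( u_k \); by the previous paragraph they form a \( \bZ \)-basis. The upgraded property and the spanning property are both invariant under \( \GL(n,\bZ) \), so after applying a suitable \( g \in \GL(n,\bZ) \) I may assume these \( n \) vectors are the standard basis \( e_1,\dots,e_n \), and it suffices to bound the remaining elements of \( \cU \) in these coordinates.

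Now take an arbitrary \( u = \sum_{i=1}^{n} c_i e_i \in \cU \) with \( c_i \in \bZ \), and fix any index \( j \) with \( c_j \neq 0 \). Replacing \( e_j \) by \( u \) yields the \( n \) vectors \( e_1,\dots,e_{j-1}, u, e_{j+1},\dots,e_n \), which are pairwise distinct and whose determinant equals \( \pm c_j \neq 0 \); hence they are linearly independent and, by the upgraded form of~\itref{item:b}, form a \( \bZ \)-basis, so that determinant is \( \pm 1 \) and \( \abs{c_j} = 1 \). Consequently every coordinate of \( u \) lies in \( \{-1,0,1\} \), giving \( \cU \subseteq \{-1,0,1\}^n \) and in particular \( \#\cU \leqslant 3^n < \infty \).

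The substantive step is the first one: recognising that linear independence of the direction vectors is precisely what guarantees non-empty intersection, so that the geometric hypothesis~\itref{item:b} can be activated on a purely linear-algebraic input; everything afterwards is the familiar replace-one-basis-vector computation. The only minor points to verify are that the flats chosen at each stage are genuinely distinct (immediate, since the flats are distinct and one only ever uses pairwise-different direction vectors) and that the \( \GL(n,\bZ) \) reduction is legitimate (clear, as it acts by isomorphisms preserving both the spanning property and the \( \bZ \)-basis condition).
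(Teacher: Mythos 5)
Your proof is correct and follows essentially the same route as the paper: select a linearly independent $n$-subset of $\cU$, reduce to the standard basis via $\GL(n,\bZ)$, and run the replace-one-vector determinant computation to force every coordinate into $\{-1,0,1\}$, whence $\#\cU\leqslant 3^n$. The one step you spell out that the paper leaves implicit --- that linear independence of the $u_{k(j)}$ already guarantees the (automatically distinct) flats $H_{k(j)}$ have a common point, so that condition (b) can be invoked on purely linear-algebraic input --- is exactly the right justification.
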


\medskip 
Let us observe (as in the proof of Theorem 3.3 in
\cite{BD}) that conditions (a), (b) imply that if
$J$ is a maximal set of indices such that $\cap_{k \in J} H_k$ is
non-empty, then $\{ u_k : k \in J \}$ is a $\bZ$-basis for $\bZ^n$. For
maximality implies that every other $u_i$ is in the span of
$\{ u_k : k \in J \}$, so, as we are assuming that the
$u_k : k \in \bL$ span $\bR^n$, then $\{ u_k : k \in J \}$ is also
spanning. Condition (a) now implies that $|J| = n$, and condition (b)
implies that $\{ u_k : k \in J \}$ is a $\bZ$-basis.

As $\mathcal U = \{ u_k : k \in \bL \}$ spans $\bR^n$, it contains a
basis, and the conditions of Proposition \ref{finite} imply this is
a $\bZ$-basis. We may change basis of $\bZ^n$ so that this
set is $\{ \mathbf e_1, \ldots, \mathbf e_n \}$. Now for each $u_k$ we may
write $u_k = \sum_{i=1}^{n} a_{ki} \mathbf e_i$, and for each $i$ with
$a_{ki} \neq 0$, then $\{ \mathbf e_j : j \neq i \} \cup \{ u_k \}$
is a $\bZ$-basis. Thus the matrix with these vectors as columns is
in $GL(n, \bZ)$ so has determinant $\pm 1$. But up to sign this determinant is
$a_{ki}$ so in general the coefficients of $u_k$ lie in
$\{ -1,0,1 \}$ and there are at most $3^n - 1$ distinct elements in
$\mathcal U$, thus establishing Proposition \ref{finite}.

It follows that under condition~\itref{item:b}, the set
\( \Set{\norm{u_k} \with k\in\bL} \) is bounded.
In particular it is now easy to
show that  \( \beta\colon \lie t_\lambda \to \bR^n \) 
is indeed continuous and it
follows that \( N_\beta \) is a Hilbert subgroup of~\( T_\lambda \) of
codimension~\( n \).

\begin{remark} \label{Goto} 
When $\bL$ is finite, the convergence condition (\ref{eq:convergence})
is vacuous, $M_{\Lambda} = \bH^\bL$, and we have the finite-dimensional
construction of \cite{BD}.
The construction of Hattori
  \cite{Hattori:Ainfty-volume} corresponds to \( n = 1 \), \(\bL = \bZ \) and
  \( u_k = 1 \in \bR \) for each~\( k \).  For general
  dimension~\( 4n \), Goto's construction \cite{Goto:A-infinity}
  corresponds to
  \( \bL = \bN_{>0} \mathbf i \cup \bN_{>0} \mathbf k \cup \Set{1,\dots,n} \),
  \begin{equation*}
    \Lambda_k =
    \begin{dcases*}
      k,&for \( k \in \bN_{>0} \mathbf i \),\\
      -k,&for \( k \in \bN_{>0} \mathbf k \),\\
      0,&for \( k \in \Set{1,\dots,n} \),
    \end{dcases*}
    \quad\text{with}\quad
    u_k =
    \begin{dcases*}
      \mathbf e_1,&for \( k \in  \bN_{>0} {\bf i} \cup \bN_{>0} {\bf k} \),\\
      \sum_{i=1}^n \mathbf e_i,&for \( k = 1 \),\\
      - \mathbf e_r,&for \( k \in \Set{2,\dots,n} \).
    \end{dcases*}
  \end{equation*}
  Thus Goto's construction is for one concrete choice of
  \( (\lambda_k)_{k\in\bL} \) and only one of the \( u_k \)'s is
  repeatedly infinitely many times.
\end{remark}

\medskip 
Just as in Hattori \cite{Hattori:Ainfty-volume}, one may use
the \( T_\lambda \)
action to show that different choices of \( (\Lambda_k)_{k\in\bL} \)
yielding the same \( (\lambda_k)_{k\in\bL} \)
result in hyperKähler structures that are isometric via a
$T^n$-equivariant tri-holomorphic map. Choosing a complex structure and writing
$\Lambda=(\Lambda_z, \Lambda_w)$ we shall often assume that either
$\Lambda_z =0$ or its terms grow like $O(k)$, and similarly for
$\Lambda_w$.

The full classification of hypertoric manifolds relies on the following 
operation from \cite{Bielawski:tri-Hamiltonian} that does not change the
topology: given a hyperK\"ahler manifold $M^{4n}$ with tri-Hamiltonian
$T^n$ action, a {\em Taub-NUT deformation} is any smooth hyperK\"ahler
quotient of $M \times \bH^m$ by $\bR^m$ where $\bR^m \subset \bH^m$
acts by translation on $\bH^m$ and via an injective linear map
$\bR^m \rightarrow {\rm Lie}(T^n)$ on $M$.

\begin{theorem} \label{classification} \cite{DS-SMS}
Let $M$ be a connected complete hypertoric manifold of dimension $4n$. Then
$M$ is topologically a product $M = M_2 \times (S^1 \times \bR^3)^m$ with $M_2 = M(\beta, \lambda)$ for some $\beta, \lambda$. The hyperK\"ahler
metric on $M$ is either the product hyperK\"ahler metric or a Taub-NUT
deformation of this metric.
\end{theorem}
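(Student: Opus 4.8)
The plan is to recover the combinatorial data (the flats) from the geometry, split off the flat factors, match the remainder with a model $M(\beta,\lambda)$, and finally pin down the metric. First I would form the hyperKähler moment map $\mu \colon M \to \iH \otimes (\bR^n)^* \cong \bR^{3n}$ for the given tri-Hamiltonian $T^n$ action, having identified $\nLie(T^n) \cong \bR^n$. Using completeness together with Bielawski's local structure theory from \cite{Bielawski:tri-Hamiltonian}, I would show that the locus where $T^n$ fails to act freely projects under $\mu$ to a locally finite union of codimension-$3$ affine subspaces; along each such subspace the infinitesimal stabiliser is a circle, and reading off its primitive weight $u_k \in \bZ^n$ together with the value of $\mu$ gives a flat $H_k = H(u_k,\lambda_k)$ exactly as in \eqref{eq:flat}. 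By Proposition~\ref{finite} only finitely many distinct weights can occur, and smoothness of $M$ forces the freeness conditions \itref{item:a} and \itref{item:b} of Theorem~\ref{thm:construction}; the spacing of the flats dictated by completeness and properness of $\mu$ away from the discriminant would yield the convergence condition~\eqref{eq:convergence}.

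Next I would split off the flat summand. If $\Set{u_k \with k\in\bL}$ spans a proper subspace $V \subsetneq \bR^n$, choose a complementary subtorus $T^m \subset T^n$ whose Lie algebra maps isomorphically to $\bR^n/V$; this subtorus acts freely with no degenerate fibres, and its moment map exhibits the corresponding directions as flat $(S^1 \times \bR^3)$ factors, so that $M \cong M_2 \times (S^1 \times \bR^3)^m$ topologically, with $M_2$ carrying a residual torus action whose weights now span. Having arranged spanning weights, I would invoke Theorem~\ref{thm:construction} to build the model $M(\beta,\lambda)$ from the extracted data and then appeal to a rigidity statement: a complete hypertoric manifold is determined, up to a metric deformation, by its collection of flats. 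In dimension four this is precisely the content of Swann's classification \cite{Swann:twist-mod}, and the higher-dimensional statement reduces to it over the base of the moment map, so $M_2$ is identified with $M(\beta,\lambda)$.

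Finally I would determine the metric. Writing it in the Pedersen-Poon generalisation of the Gibbons-Hawking ansatz, the metric is governed by a symmetric matrix-valued function $\Phi$ on the open dense set where $\mu$ is a submersion; harmonicity and the prescribed boundary behaviour at the flats force $\Phi = \Phi_0 + C$, where $\Phi_0$ is the canonical sum over the flats $H_k$ realising $M(\beta,\lambda)$ and $C$ is a constant positive semi-definite matrix. The case $C = 0$ gives the product hyperKähler metric, while a nonzero $C$ is exactly the effect of a Taub-NUT deformation: quotienting $M(\beta,\lambda) \times \bH^m$ by $\bR^m$ adds precisely such a constant to the monopole data without altering the topology.

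I would expect the main obstacle to be the rigidity step in the infinite case: controlling the convergence of the infinite monopole sum defining $\Phi_0$ and showing that the global manifold is genuinely reconstructed from its (infinitely many) flats, rather than merely locally. The reduction of the higher-dimensional uniqueness to Swann's four-dimensional result, carried out uniformly over a noncompact moment-map base, is the delicate point; the splitting of the flat factors and the matching of the constant $C$ with the Taub-NUT parameters are comparatively routine once that rigidity is in hand.
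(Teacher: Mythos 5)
The paper does not actually prove this theorem: it is quoted from \cite{DS-SMS}, and the introduction indicates that the proof there is obtained by combining Bielawski's structure theory for complete tri-Hamiltonian $T^n$-actions \cite{Bielawski:tri-Hamiltonian}, Swann's four-dimensional classification \cite{Swann:twist-mod}, and the Hilbert-quotient construction of Theorem~\ref{thm:construction}. Your outline follows essentially that route --- recover the flats and weights from the non-free locus, split off $(S^1\times\bR^3)$ factors for directions not spanned by the $u_k$, express the metric in the Pedersen--Poon ansatz, and argue $\Phi=\Phi_0+C$ with constant $C$ accounting for the Taub-NUT deformation --- so in broad strategy it matches the cited proof. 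Two points in your sketch are thinner than they should be. First, the convergence condition \eqref{eq:convergence} does not come from ``spacing dictated by completeness and properness of $\mu$''; it comes out of Bielawski's analysis of positive polyharmonic functions, which forces $\Phi=\sum_k u_k\otimes u_k/r_k + C$, so that finiteness of $\Phi$ at a single point off the flats, combined with the fact that only finitely many distinct $u_k$ occur (Proposition~\ref{finite}), is literally equivalent to \eqref{eq:convergence}. Second, ``invoke Theorem~\ref{thm:construction} and appeal to rigidity'' elides the two halves that actually have to be checked: uniqueness requires that the connection $A$ and the topology of the $T^n$-bundle over $\bR^{3n}\setminus\bigcup_k H_k$ are determined by $\Phi$ together with the integrality of the $u_k$ (this is where the reduction to harmonic functions on three-dimensional slices, and in dimension four to \cite{Swann:twist-mod}, enters), while existence requires not just that $M(\beta,\lambda)$ is smooth with the prescribed flats but the explicit computation (as in Section~6 of this paper) that the quotient metric has potential exactly $\Phi_0$ with $C=0$. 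You correctly flag the rigidity step as the delicate point, but as written the identification of $M_2$ with $M(\beta,\lambda)$ is asserted rather than derived; with those two steps filled in from \cite{Bielawski:tri-Hamiltonian} and \cite{DS-SMS}, the argument is the intended one.
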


\begin{remark}
  Several authors (see \cite{BLPW} for example) have explored the notion of Gale duality
  for hypertoric varieties. This swaps $\n = {\rm Lie} (N)$ and
  $\n^\perp$, its orthogonal in $\tf = {\rm Lie} (T)$. Now $\n^\perp$ can
  be identified with the Lie algebra of the isometry group $T/N$ of the
  hyperk\"ahler quotient.  The duality also interchanges the level $\eta$ at
  which we reduce and an element $\xi$ of the Lie algebra of the
  isometry group. Thus deformation and isometry parameters are interchanged.

  This duality is an example of the notion of `symplectic duality',
  which ultimately comes from physics (duality between Coulomb and 
Higgs branches). The
  swapping of deformation and isometry parameters is a general feature
  of this picture.

  In our set-up, $\n$ has finite codimension in $\tf_{\lambda}$ so the
  'Gale dual' would be a quotient of $M_{\Lambda}$ by a finite-dimensional subgroup of
  $T_\lambda$, and hence would be an infinite-dimensional hypertoric.
\end{remark}

\section{The $T^n$ action on the hypertoric manifolds}
\label{sec:properties}

We have defined $M$ as a hyperk\"ahler quotient of $M_\Lambda$ by the
finite codimension subtorus $N_\beta$ of the Hilbert group $T_\lambda$.
In this section we consider further the hypertoric structure of $M$.

We first recall that \( T^n = T_\lambda/N_\beta \) acts on~\( M \) 
preserving the induced hyperKähler
structure, and with moment
map~\( \phi\colon M \to \iH \otimes (\bR^n)^* \) induced
by~\( \mu_\Lambda \).  In more detail,
\( \nLie T^n = \lie t_\lambda/\lie n_\beta \) and hence
\( (\nLie T^n)^* = (\lie n_\beta)^0 \), the annihilator of
\( \lie n_\beta \) in~\( \lie t_\lambda^* \). (We recall here that under our hypotheses \( \lie n_\beta \) is a closed subspace of \( \lie t_\lambda \) ). Now
on~\( \mu_\beta^{-1}(0) \) the map~\( \mu_\Lambda \) takes values in
\( (\lie n_\beta)^0 = (\nLie T^n)^* \) and descends to~\( M \)
as~\( \phi \).

Explicitly, $\phi(x) =a $ where
$a \in \iH \otimes (\bR^n)^{*} = \iH \otimes ({\rm Lie} (T^n))^{*}$ is
defined by (\ref{eq:im-beta}), that is
\[
a(u_k) = \lambda_k + \tfrac12\overline x_k\mathbf ix_k \;\; : \;\; k \in \bL
\]
 Note that $a$ is uniquely determined by
$x$ if we assume, as we always do, that the $u_k$ generate
\( \bR^n \).

In particular, observe that the stabiliser in $T_{\lambda}$ of $x$ has
Lie algebra spanned by the vectors $e_k$ where $x_k=0$, or
equivalently, using (\ref{eq:im-beta}), (\ref{eq:flat}), the set of
vectors $e_k$ where $a = \phi(x)$ is in the flat $H_k$ defined
by $a(u_k) = \lambda_k$. The analogous statement
for the $T^n$-stabiliser is that it is spanned by the $u_k$ such that
$a = \phi(x) \in H_k$.

Note also that the Lie algebra of $T^n$ may be identified with the
span of a finite collection of $e_k$ (we can take $e_k : k \in I_1$
where $u_k : k \in I_1$ is a basis for $\R^n$).

The above discussion implies, as in~\cite{BD}:

\begin{lemma} \label{phifibre} The hyperk\"ahler moment map $\phi$ for
  the $T^n$-action on $M$ induces a homeomorphism
  \( M/T^n \to \iH\otimes(\bR^n)^* = \bR^{3n}\). In particular, $M$ is connected.

  For \( p \in M \), the stabiliser \( \stab_{T^n}(p) \) is the
  subtorus with Lie algebra spanned by the \( u_k \) such that
  \( \phi(p) \in H_k \).
\end{lemma}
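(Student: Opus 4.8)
The plan is to establish the two assertions separately. The stabiliser statement I would read off directly from the discussion preceding the lemma: the stabiliser of a point $x \in \mu_\beta^{-1}(0)$ in $T_\lambda$ has Lie algebra $\Span\Set{e_k \with x_k = 0}$, and by \eqref{eq:im-beta} and \eqref{eq:flat} the vanishing $x_k = 0$ is equivalent to $\phi(x) \in H_k$. Since conditions \itref{item:a} and \itref{item:b} guarantee that $N_\beta$ acts freely on $\mu_\beta^{-1}(0)$, this stabiliser meets $N_\beta$ trivially, so $\beta$ maps it isomorphically onto $\stab_{T^n}(p)$; the latter is therefore the subtorus with Lie algebra $\Span\Set{u_k \with \phi(p) \in H_k}$.

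For the homeomorphism, the map $\phi$ is $T^n$-invariant, being a moment map for the abelian group $T^n$, and continuous, so it descends to a continuous $\bar\phi \colon M/T^n \to \iH\otimes(\bR^n)^* \cong \bR^{3n}$; it remains to see that $\bar\phi$ is a bijection with continuous inverse. Writing $f\colon \bH \to \iH$, $f(y) = \tfrac12\overline{y}\mathbf i y$, for the factorwise map, whose nonzero fibres are exactly the free $S^1$-orbits and whose zero fibre is $\Set{0}$, equation \eqref{eq:im-beta} says that $x \in \mu_\beta^{-1}(0)$ satisfies $\phi(x) = a$ precisely when $f(x_k) = a(u_k) - \lambda_k$ for every $k$. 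For injectivity, if $x, y \in \mu_\beta^{-1}(0)$ both map to $a$, then $x_k, y_k$ lie in a common fibre of $f$, so $y = gx$ for some $g \in (S^1)^\bL$; because $\abs{x_k}^2 = 2\abs{a(u_k) - \lambda_k} \sim 2\abs{\lambda_k}$ for large $k$ (here $\norm{u_k}$ is bounded while $\abs{\lambda_k} \to \infty$), one finds $\sum_k (1+\abs{\lambda_k})\abs{1 - g_k}^2 \lesssim \sum_k \abs{x_k}^2 \abs{1-g_k}^2 = \sum_k \abs{y_k - x_k}^2 < \infty$, so $g \in T_\lambda$ and $x, y$ share a $T^n$-orbit.

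Surjectivity is where the convergence condition \eqref{eq:convergence} is essential, and I expect it to be the main obstacle. Given $a$, I would solve $f(x_k) = a(u_k) - \lambda_k$ factorwise, taking on each fibre the point $x_k$ nearest to $\Lambda_k$, and then check that $x - \Lambda \in L^2(\bH)$. By \itref{item:a} only finitely many $k$ have $a \in H_k$, that is $x_k = 0$; these, together with any other $k$ for which $\abs{\lambda_k}$ is small, form a finite set contributing finitely many finite terms to $\sum_k \abs{x_k - \Lambda_k}^2$. For the remaining $k$, the value $a(u_k)$ stays bounded while $\abs{\Lambda_k} = \sqrt{2\abs{\lambda_k}} \to \infty$, and since the derivative of $f$ transverse to the orbit at $\Lambda_k$ has size $\sim \abs{\Lambda_k}$, a scaling estimate should give $\abs{x_k - \Lambda_k} \lesssim \abs{a(u_k)}\,\abs{\lambda_k}^{-1/2}$. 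Hence $\sum_k \abs{x_k - \Lambda_k}^2 \lesssim \sum_k \abs{\lambda_k}^{-1} < \infty$ by \eqref{eq:convergence}, so $x \in M_\Lambda \cap \mu_\beta^{-1}(0)$ with $\phi(x) = a$.

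Finally, the same factorwise prescription depends continuously on $a$ once the $S^1$-phase indeterminacies are absorbed into the $T^n$-action, so $a \mapsto \bar\phi^{-1}(a)$ is a continuous map $\bR^{3n} \to M/T^n$, making $\bar\phi$ a homeomorphism. Connectedness then follows formally: $\bR^{3n}$ is connected and the orbit map $M \to M/T^n$ is an open surjection with connected fibres, so a clopen splitting of $M$ would descend to one of $\bR^{3n}$, forcing $M$ to be connected.
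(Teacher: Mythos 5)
Your proof is correct and follows essentially the same route as the paper, which simply asserts the lemma as a consequence of the preceding discussion (the $e_k$/$u_k$ stabiliser correspondence and the fibrewise form \eqref{eq:im-beta} of the moment map equation) together with the finite-dimensional argument of \cite{BD}; your fibrewise solution of $\tfrac12\overline{x}_k\mathbf i x_k=a(u_k)-\lambda_k$ with the $L^2$ estimate coming from \eqref{eq:convergence} is exactly the extra content needed in the Hilbert-manifold setting. The only place you are briefer than you should be is the continuity of the inverse near points lying on flats, but the local uniformity of your estimate $\abs{x_k-\Lambda_k}\lesssim\abs{a(u_k)}\,\abs{\lambda_k}^{-1/2}$ does make the section continuous into $M/T^n$, so this is a presentational rather than a mathematical gap.
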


\bigskip
We saw in the discussion after Proposition \ref{finite} that if a collection
of flats $H_k$ intersect then the corresponding $u_k$ are contained in
a $\bZ$-basis. Combining this with Lemma \ref{phifibre}, we see, as in
the finite case:

\begin{proposition} \label{fibres2} Under the hypotheses of Theorem
  \ref{thm:construction}, we have that if $a \in \iH\otimes(\bR^n)^* \cong
 \bR^{3n}$
  lies in exactly $r$ flats then the $T^n$-stabiliser of a point in
  $\phi^{-1}(a)$ is an $r$-dimensional torus.
\end{proposition}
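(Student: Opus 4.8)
The plan is to combine Lemma~\ref{phifibre} with the structural observation recorded just before Proposition~\ref{finite}. Fix $a \in \iH\otimes(\bR^n)^*$ and suppose $a$ lies in exactly $r$ of the flats, say $a \in H_{k(1)},\dots,H_{k(r)}$ and $a \notin H_k$ for all other $k \in \bL$. By Lemma~\ref{phifibre}, the $T^n$-stabiliser of any point $p \in \phi^{-1}(a)$ is the subtorus whose Lie algebra is the span of $\{u_k : a \in H_k\} = \{u_{k(1)},\dots,u_{k(r)}\}$. Since a torus has the dimension of its Lie algebra, the entire claim reduces to showing that these $r$ vectors are linearly independent, so that $\dim \Span\{u_{k(1)},\dots,u_{k(r)}\} = r$.

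First I would show that a set of flats with a common point corresponds to a linearly independent family of $u_k$. The intersection $\bigcap_{j=1}^{r} H_{k(j)}$ is non-empty (it contains $a$), so condition~\itref{item:a} of Theorem~\ref{thm:construction} forces $r \leqslant n$: any $n+1$ flats intersect trivially. To promote independence, I would extend $\{H_{k(1)},\dots,H_{k(r)}\}$ to a \emph{maximal} collection of flats through $a$; call the index set $J \supseteq \{k(1),\dots,k(r)\}$. By the discussion following Proposition~\ref{finite}, maximality of $J$ together with conditions~\itref{item:a} and~\itref{item:b} guarantees that $\{u_k : k \in J\}$ is a $\bZ$-basis of $\bZ^n$, hence in particular $\bR$-linearly independent. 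Any subset of a linearly independent set is linearly independent, so $\{u_{k(1)},\dots,u_{k(r)}\}$ is linearly independent and spans an $r$-dimensional subspace.

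It then follows that the stabiliser subtorus has Lie algebra of dimension exactly $r$, and therefore is an $r$-dimensional torus, as required. The only point demanding care is the passage from ``these $r$ flats meet'' to ``these $r$ vectors are independent'': a priori condition~\itref{item:b} only asserts the $\bZ$-basis property for a family of \emph{exactly} $n$ flats with non-empty intersection, not for an arbitrary sub-collection. This is precisely why I would pass through a maximal intersecting family $J$ rather than argue directly, since maximality is what lets me invoke the already-established fact that $\{u_k : k \in J\}$ is a $\bZ$-basis; the desired independence is then inherited by the subset indexed by $\{k(1),\dots,k(r)\}$. Since everything else is a direct quotation of Lemma~\ref{phifibre} and the equivalence $\dim T = \dim \nLie T$, I expect this extension-to-maximality step to be the whole substance of the argument.
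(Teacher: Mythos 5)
Your overall strategy is exactly the paper's: the paper deduces this proposition by combining Lemma~\ref{phifibre} (the stabiliser of $p$ has Lie algebra $\Span\Set{u_k \with \phi(p)\in H_k}$) with the observation, recorded in the discussion around Proposition~\ref{finite}, that the $u_k$ attached to any intersecting family of flats are contained in a $\bZ$-basis of $\bZ^n$; linear independence then gives dimension $r$. So the reduction to independence and the idea of passing through a maximal family are both the intended argument.

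There is, however, one slip in how you invoke that discussion. You enlarge $\{H_{k(1)},\dots,H_{k(r)}\}$ to a maximal collection of flats \emph{through $a$} and then assert that the corresponding $u_k$ form a $\bZ$-basis of $\bZ^n$. Since $a$ lies in exactly $r$ flats, your $J$ is necessarily $\{k(1),\dots,k(r)\}$ itself, and when $r<n$ a set of $r$ vectors cannot be a $\bZ$-basis of $\bZ^n$; as stated the claim is false and the subsequent ``subset of a linearly independent set'' step is vacuous. The maximality used in the paper is different: $J$ is maximal among index sets whose flats have \emph{some} non-empty common intersection, not necessarily one containing $a$. With that notion the argument runs: if $u_i\notin\Span\{u_k : k\in J\}$ then $H_i$ imposes an independent affine condition on the non-empty affine subspace $\bigcap_{k\in J}H_k$, so $J\cup\{i\}$ still has non-empty intersection, contradicting maximality; hence $\{u_k : k\in J\}$ spans $\bR^n$, condition~\itref{item:a} forces $\abs{J}=n$, and condition~\itref{item:b} makes it a $\bZ$-basis. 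A maximal such $J\supseteq\{k(1),\dots,k(r)\}$ exists precisely because condition~\itref{item:a} caps every intersecting family at $n$ elements. Substituting this notion of maximality repairs your proof; everything else you wrote is correct and coincides with the paper's route.
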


From this result and Lemma \ref{phifibre}, we see $\phi$ maps $M$ onto
$\bR^{3n}$ with generic fibre $T^n$, but these fibres may collapse to
lower dimensional tori depending on how many flats contain the point
in $\bR^{3n}$. In particular the intersections of $n$ flats correspond
to fixed points of the $T^n$ action on $M$.

The pre-image of the flat $H_k$ is, from the discussion earlier, the
hyperk\"ahler subvariety $S_k$ of $M$ given by the 
quaternionic condition $x_k=0$. So we have a
collection $S_k (k \in \bL)$ of $T^n$-invariant hyperk\"ahler
subvarieties of $M$, such that the $T^n$ action is free on the
complement of $\bigcup_{k \in \bL} S_k$ in $M$. The moment map $\phi$
expresses $M - \bigcup_{k \in \bL} S_k$ as a $T^n$-bundle over
$\bR^{3n} - \bigcup_{k \in \bL} H_k$ . The real dimension of each
$S_k$ is $4(n-1)$.

\medskip
 We can also split the $T^n$-moment map into real and complex
parts; equivalently, we write $a \in \iH \otimes (\bR^n)^{*}$ as
$(a_{\bR}, a_{\bC})$, which we view as lying in $(\bR^n)^* \oplus
(\bC \otimes (\bR^n)^*)$. This amounts to choosing a particular complex structure, which
we refer to as $\sf I$, in the 2-sphere of complex structures defined by
the hyperk\"ahler structure. So we split $\bH = \bC \oplus \mathbf j 
\bC$ and write $x \in \bH^\bL$ as $x = z + \mathbf j w$ with $z,w \in \bC^\bL$.

Let us write $\lambda_k \in \iH$ as \( (\lambda_k^1, \lambda_k^2+ i 
\lambda_k^3) = (\lambda_k^\bR, \lambda_k^{\C}) \in \bR \oplus \bC \). 
Equation~\eqref{eq:im-beta} can now be written as
\begin{equation} \label{betareal} a_{\bR}(u_k)
  =\lambda_k^\bR + \frac{1}{2} (|z_k|^2 - |w_k|^2)
\end{equation}

\begin{equation} \label{betacomplex} a_{\bC}(u_k)  =
  \lambda_k^{\C} + z_k w_k
\end{equation}
(This was the form of the equations used in
\cite{BD}).

It is useful to look at just the complex moment map
$\phi_{\bC} : (z,w) \mapsto a_{\bC}$. A fibre of $\phi_{\bC}$ will map
via $\phi_{\bR}$ onto $(\bR^n)^*  \cong \bR^n$ with generic fiber a copy of $T^n$.  If
$a_{\bC}$ is in the complement of the complex flats
\begin{equation} \label{cplexflats}
  H_{k, \bC} = \{ b \in \bC^n \otimes (\bR^n)^* : b(u_k) = \lambda_k^\C  \}
\end{equation}
then $(a_{\bR}, a_{\bC})$ is in the complement of the $H_k$ for all
$a_{\bR}$, and hence $\phi_{\bC}^{-1} (a_{\bC})$ is just
$T^n \times \bR^n$.  If $a_{\bC}$ does lie in some $H_{k, \bC}$, then
$\phi_{\bR} : \phi_{\bC}^{-1} (a_{\bf C}) \rightarrow \bR^n$ has
degenerate fibres (tori of dimension less than $n$) over points
$a_{\bR}$ in the hyperplanes
$ a_{\bR}(u_k) = \lambda_k^\bR$.

Note that  $\phi_{\bC}^{-1} (a_{\bC})$ is a toric variety for the $T^n$
action (the moment map is projection onto the $\bR^n$ factor).

\begin{remark}
  In \cite{BD} the authors considered the case
  when all $\lambda_k$ are set to zero. The space $M(\beta, \lambda)$
  is now a cone with action of $\R^{+}$. It was shown that the origin
  was the only singularity (and hence $M(\beta, \lambda)$ was a cone
  over a smooth 3-Sasakian manifold) if and only if every set of $n$
  vectors $u_k$ is linearly independent and every set of less than $n$
  vectors $u_k$ is part of a $\bZ$-basis. In our situation these
  conditions are never satisfied because Lemma \ref{finite} shows that
  some $u_i$ are necessarily repeated infinitely often. This is to be expected
as of course the 3-Sasakian space, being compact, cannot have infinite topological type.
\end{remark}

\section{Complex structures}
In this section we consider complex structures on the hypertoric quotients.
For ease of notation we shall suppress the subscript $\beta$ and just write
$N$ etc.

\smallskip
The complexification \(T_\lambda^{\bC} \)
of  \( T_\lambda \) is the group
\begin{equation*}
  T_\lambda^{\bC}  = \Set[\Big]{ g \in
  (\bC^*)^{\bL} \with \sum_{k\in\bL} (1 +
  \abs{\lambda_k})\, \abs{1-g_k}^2 < \infty}.
\end{equation*}
This acts on $M_{\Lambda}$ and induces an action of the complexification 
 $N^{\C}$ of $N$.
The zero locus of the complex part of our moment map for $N$ consists of
the pairs $(z,w) \in M_{\Lambda}$ satisfying equation (\ref{betacomplex}) for some
$a_{\bC} \in \bC^n$. This locus can be identified with the set of $(a_{\bC}, (z,w)) \in \bC^n \times M_{\Lambda}$ satisfying equation (\ref{betacomplex}),
as $a_{\bC}$ is determined by $(z,w)$ since the $u_k$ span $\bC^n$ over $\bC$.

We recall the definition (\ref{cplexflats}) of the complex flats $H_{k, \bC}$.

\begin{proposition} \label{discrete}
Suppose that each set of $n+1$ flats $H_{k, \bC}$ has empty intersection. Then
the action of $N^{\bC}$  on 

\begin{equation} \label{cplexNmap}
\{ (a_{\bC}, (z,w)) :   a_{\bC}(u_k)  = \lambda_k^{\C}  + z_k w_k \}
\end{equation}
by $t. (a_{\bC}, (z,w)) = (a_{\bC}, (t.z, t^{-1}.w))$, has discrete stabilisers.
\end{proposition}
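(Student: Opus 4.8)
The plan is to compute the stabiliser of a point explicitly, reduce discreteness to a statement of linear independence of the $u_k$, and then extract that independence from the hypothesis on the complex flats. First I would determine the stabiliser of $(a_{\bC},(z,w))$. For $t=(t_k)_{k\in\bL}\in N^{\bC}$ the fixed-point equations $t_kz_k=z_k$ and $t_k^{-1}w_k=w_k$ force $t_k=1$ whenever $(z_k,w_k)\neq(0,0)$. Thus, with $K=\Set{k\in\bL\with z_k=w_k=0}$, the stabiliser is $\Set{t\in N^{\bC}\with t_k=1 \text{ for all } k\notin K}$, whose Lie algebra is $\Set{\xi\in\n^{\bC}\with \xi_k=0 \text{ for all } k\notin K}$. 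Since $\n^{\bC}=\ker\beta^{\bC}$ with $\beta^{\bC}(\xi)=\sum_k\xi_ku_k$, such a $\xi$ satisfies $\sum_{k\in K}\xi_ku_k=0$, so the Lie algebra vanishes exactly when $\Set{u_k\with k\in K}$ is linearly independent over $\bC$. Discreteness of the stabiliser is equivalent to this vanishing, so the whole statement reduces to proving that independence.

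Next I would bring in the hypothesis. For $k\in K$ equation~\eqref{betacomplex} gives $a_{\bC}(u_k)=\lambda_k^{\C}$, i.e.\ $a_{\bC}\in H_{k,\bC}$; hence $K\subseteq K_a:=\Set{k\with a_{\bC}\in H_{k,\bC}}$. As $n+1$ flats meeting at $a_{\bC}$ would contradict the hypothesis, $\abs{K_a}\leqslant n$, so $K$ is finite and the Lie algebra above is finite-dimensional. It remains to show $\Set{u_k\with k\in K_a}$ is independent. Suppose not, of rank $r<\abs{K_a}$, and pick $K_0\subseteq K_a$ with $\Set{u_k\with k\in K_0}$ a basis of the span. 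Using that $\mathcal U=\Set{u_k}$ spans $\bR^n$, I would extend this to a full basis by vectors $u_{j_1},\dots,u_{j_{n-r}}$; any index realising one of these lies outside $K_a$, since its vector is not in the span of $\Set{u_k\with k\in K_a}$. The unique solution $p$ of the $n$ independent equations $b(u_k)=\lambda_k^{\C}$ $(k\in K_0)$ and $b(u_{j_l})=\lambda_{j_l}^{\C}$ then also satisfies $p(u_k)=\lambda_k^{\C}$ for every remaining $k\in K_a$, because $u_k$ is a combination of the $u_{k'}$ $(k'\in K_0)$ and $a_{\bC}$ lies on all the corresponding flats. Thus the $\abs{K_a}+(n-r)\geqslant n+1$ distinct indices in $K_a\cup\Set{j_1,\dots,j_{n-r}}$ yield flats all containing $p$, contradicting the hypothesis. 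Hence $\Set{u_k\with k\in K_a}$, and a fortiori $\Set{u_k\with k\in K}$, is independent.

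The hard part is this last step. The complex flats need not be distinct---by Proposition~\ref{finite} some $u_k$ repeat infinitely often, and coincidences $H_{k,\bC}=H_{k',\bC}$ with $\lambda_k^{\C}=\lambda_{k'}^{\C}$ are possible---so one cannot read independence off from $\abs{K_a}\leqslant n$ alone, nor transport the freeness of $N$ on the full moment-map zero set directly to $N^{\bC}$. The device that does the work is completing $\Set{u_k\with k\in K_0}$ to a basis out of the spanning set $\mathcal U$: this manufactures the extra flats needed to exhibit $n+1$ of them through the common point $p$, converting a hypothetical dependence into a violation of the standing assumption. The remaining verifications---the fixed-point computation and the claim $p\in H_{k,\bC}$---are routine.
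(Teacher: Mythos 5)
Your argument is correct and follows essentially the same route as the paper: the paper identifies the stabiliser as sitting inside $(T_I)^{\bC}$ for $I=\{k : z_kw_k=0\}$ and then invokes the basis-extension argument given after Proposition~\ref{finite} to conclude that complex flats with non-empty common intersection have linearly independent $u_k$'s---which is exactly the step you write out in full, including the construction of the auxiliary point $p$ lying on $n+1$ flats. The only cosmetic difference is that you work with the exact stabiliser support $\{k : z_k=w_k=0\}$ rather than the larger set $I$, and you supply the details that the paper delegates to its earlier discussion.
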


\begin{proof}
The stabiliser of $(z,w)$ for the  $N^{\bC}$ action is contained in 
 $(T_I)^{\bC}$ , where $I$ is the set
of indices $k$ where $z_k w_k =0$, or equivalently the set of indices 
where $a_{\C} (u_k) = \lambda_k^\C$, ie. $a_{\C}  \in H_{k , \bC}$.
The argument after Proposition \ref{finite} shows that if a collection of
complex flats $\{ H_{k, \bC} : k \in I \}$ intersect, then the
$ \{u_k : k \in I  \}$ are linearly independent, so $\beta : e_i \mapsto u_i$ 
is injective on Lie $(T_I)^\C$, and hence $\n^{\C} \cap {\rm Lie \;} (T_I)^{\C} = 0$. We thus have that
stabilisers  $N^{\bC}$ are discrete.
\end{proof}

Observe also that the argument of Thm 5.1 of \cite{BD}
shows that, subject to the assumptions of Proposition \ref{discrete},
 $\{ \alpha_i : i \in I^c \}$ spans $\lie n^*$, where 
$I^c$ denotes the complement of $I$. The key point
is to show that $\n \cap {\rm Lie\;} (T_I) = 0$ implies 
$\lie t = \n^{\perp} + {\rm Lie\;} (T_I)$. The space $\lie t$ is now an infinite-dimensional Hilbert space but we still have $(X \cap Y)^{\perp} = 
\overline{X^{\perp} + Y^{\perp}}$ for {\em closed} subspaces $X$ and $Y$. Now  $\n$ is closed, as
remarked earlier, and Lie$(T_I)$ is closed as $I$ is finite, so
we have $\lie t = \overline{\n^{\perp} +  {{\rm Lie\;} (T_I)}^\perp}$.
But also $\n^{\perp}$ is finite-dimensional (as $\n$ has finite codimension) 
so $\n^{\perp} +  {{\rm Lie\;} (T_I)}^\perp$ is closed, since the sum
of a closed and a finite-dimensional space is closed.

\medskip
Our next aim is to identify, under certain conditions, the complex quotient $(\mu^{\C})^{-1}(0)/N^{\C}$
with the hyperk\"ahler quotient $\mu^{-1}(\lambda^\bR, 0)/N$.

We shall assume that we are in the situation of Proposition
\ref{discrete}.  In addition, we need to make certain assumptions
about the growth rates of the components of $\Lambda$.  Choosing a
complex structure, let us write $\Lambda = (\Lambda_z, \Lambda_w)$.
Now $|\lambda_k| = |\Lambda_k|^2 = |(\Lambda_z)_k|^2 + |(\Lambda_w)_k|^2$. The
$\lambda_k$ need to grow fast enough for condition (\ref{eq:convergence})
to hold, and we shall assume that they grow like $O(k^\delta)$ for
some $\delta > 1$.  (In concrete examples, one often takes
$\delta = 2$).

We shall assume in the following
discussion that the growth rates of $(\Lambda_z)_k$ and $(\Lambda_w)_k$
are {\em commensurable} in the sense that there exist positive constants
$C_1$ and $C_2$ independent of $k$ such that
\[
C_1 |(\Lambda_w)_k| \leqslant |(\Lambda_z)_k| \leqslant C_2 |(\Lambda_w)_k|
\]
for sufficiently large
 $k$. This implies of course that both $|(\Lambda_z)_k|^2$ and 
$|(\Lambda_w)_k|^2$ have commensurable growth rates with $\lambda_k$, and
in particular tend to infinity as $k \rightarrow \infty$.

Now a point $(z,w) \in M_{\Lambda}$ may be written
$(z,w) = (\hat{z}, \hat{w}) + (\Lambda_z, \Lambda_w)$ where
$\hat{z}$ and $\hat{w}$ are square-summable. So both
$|z_k|^2$ and $|w_k|^2$ have commensurable growth rates with $\lambda_k$
also.

\medskip
We shall now turn to the study of the quotient $(\mu^{\C})^{-1}(0)/N^{\C}$.
We shall make use of the theory of monotone operators, as developed
for example in Showalter's book \cite{Showalter}.

\medskip
(i) Let us consider the $N^{\bC}$-orbit $\mathcal O$ 
through a point $(z,w) \in \mu_{\C}^{-1}(0)$
 as above. We want to show that the real moment map $\mu^\R$ is surjective
when restricted to the orbit.

\smallskip
As $N$ is abelian, we have $\lie n^\bC = \lie n + \lie a$ with $\lie a
= i \lie n \cong \lie n$ Abelian. Writing $A = \exp(\lie a)$ we have
 $N^\C = NA$. Now  $\mu^\bR$ is $N$-invariant, as it is the moment
map for an Abelian action,
so we can view $\mu^\bR$, restricted to the orbit $\mathcal O$, 
as a map from $\lie a$ to $\lie n^{*}$
(identifying $A$ with $\lie a$ via the exponential map).
Explicitly
\[
\mu^\bR (e^y.(z,w))= \frac{1}{2} \sum_{i \in \bL} \left( |z_i|^2 e^{2 \alpha_i . y} 
- |w_i|^2 e^{-2 \alpha_i . y} \right) \alpha_i + c_1
\]
where $y \in \lie a$. (Compare
with the expressions in \cite{BD} and 
\cite{Guillemin}).
Also $\alpha_i = \iota^* e_i^* \in \lie n^{*}$
and $c_1 = \sum \lambda_i^1 \alpha_i$ is the constant term.

Writing  $\tilde{\mu}(y)= \mu^\bR (e^y.(z,w))$, we get a map
$\tilde{\mu} : \lie n \rightarrow \lie n^*$,
with the same image as $\mu^{\bR}$ on the orbit:

\[
\tilde{\mu}(y) = \frac{1}{2} \sum_{i \in \bL} \left( |z_i|^2 e^{2 \alpha_i . y} 
- |w_i|^2 e^{-2 \alpha_i . y} + 2 \lambda_i^1\right) \alpha_i 
\]
This is the Legendre transform $D \sf F$
of the map ${\sf F} : \lie n \rightarrow \R$
\[
{\sf F} = \frac{1}{4} \sum_{i \in \bL} \left( |z_i|^2 e^{2 \alpha_i . y} 
+ |w_i|^2 e^{-2 \alpha_i . y}  - (|z_i|^2 + |w_i|^2) + 4 \lambda_i^1 \alpha_i.y 
\right)  
\]
The constant term $(|z_i|^2 + |w_i|^2)$ is subtracted to ensure convergence.
In more detail, let us
write $t_i = 2 \alpha_i.y$, so for $y \in \lie n$ we have
$t_i = 2 e_i^*(y) = 2y_i$.
We now see that $\sf F$ is $\frac{1}{4}$ times a sum of terms
\[
(|z_i|^2 - |w_i|^2 + 2 \lambda_i^1) t_i +
|z_i|^2 (e^{t_i} - 1 - t_i) +
|w_i|^2 (e^{-t_i} - 1 + t_i).
\]
The sum of the first terms just gives $\tilde{\mu}(0)=\mu^\bR(z,w)$ evaluated on $y \in \lie n$.
The second and third terms are $\sim |z_i|^2 t_i^2$ (resp. $\sim |w_i|^2 t_i^2)$)
for large $i$, so the sums converge, since 
$\sum (1 + |\lambda_i|) t_i^2 < \infty$ by our definition of the Lie algebra
$\tf_{\lambda}$ of which $\lie n$ is a subalgebra.

\smallskip
Now $D^2 {\sf F} = \sum_{i \in \bL} \left( |z_i|^2 e^{2 \alpha_i . y}+ |w_i|^2 e^{-2 \alpha_i . y} \right)
\alpha_i \otimes \alpha_i$. From above, $\{\alpha_i : i \in I^c \}$ spans
$\n^*$ where $I^c$ is the set of indices $i$ for which  $z_i$ and $w_i$ are both nonzero, so $\sf F$ is convex, and $\tilde{\mu} = D \sf F$
is a \emph{monotone operator}, in the sense that
$\langle \tilde{\mu}(u) - \tilde{\mu}(v), u-v \rangle \geq 0$ for all $u,v$.
(This follows by applying the one-variable mean value theorem to
the function $t \mapsto \langle \tilde{\mu}(t u + (1-t)v), u-v \rangle$).

\smallskip
(ii) To show surjectivity of the
monotone operator $\tilde{\mu} : \n \rightarrow \n^*$ it remains 
(see \cite{Showalter} section II.2) to show
that $\tilde{\mu}$ maps bounded sets to bounded sets, and
 to prove a coercivity condition 
\[
\frac{\tilde{\mu}(y).y}{|| y ||} \rightarrow \infty \;\; {\rm as} \;\; ||y|| \rightarrow \infty.
\]
We first show coercivity.
 Writing as above $y_i =  \alpha_i . y$, so
\[
\tilde{\mu}(y).y =  \frac{1}{2} \sum_{i \in \bL} \left( |z_i|^2 e^{2y_i} 
- |w_i|^2 e^{-2y_i}  + 2 \lambda_i^1 \right) y_i
\]

Let us look at the $i$th term, which we can rewrite (suppressing the
overall factor of $\frac{1}{2}$) as
\begin{equation} \label{ti}
y_i ( |z_i|^2 - |w_i|^2 + 2 \lambda_i^1) + f(y_i) |z_i|^2 + f(-y_i) |w_i|^2
\end{equation}
where $f(t) = t (e^{2t} - 1) = 2 t \sinh(t) e^t$.
Note $f(t)$ is always nonnegative and we have $f(0)=f^\prime(0)=0$
and $f^{\prime \prime}(t) = 4(1+t) e^{2t}$, so $f(t) \geqslant 2t^2$ for
all $t \geqslant 0$.

Moreover the first term in (\ref{ti})
is just the $i$th term in the linear fnctional $\tilde{\mu}(0)$ 
evaluated on $y$. Thus
\[
2 \tilde{\mu}(y) (y) \geqslant \tilde{\mu}(0)(y) + \sum_{i \in \bL}
2 \min(|z_i|^2, |w_i|^2) y_i^2. 
\]

%The upshot is that the $i$th term of $\mu_1(y).y$ is the sum of a term that grows at most linearly
%in $|| y ||$, and a term that grows at least as fast as 
%$

Recall $(z,w) = (\hat{z}, \hat{w}) + (\Lambda_z, \Lambda_w)$ where
$\hat{z}$ and $\hat{w}$ are square-summable.
We see there exists a constant $B_z$ depending on $z$ but independent of
$i$ such that $|z_i|^2 \leqslant B_z (1 + |\lambda_i|)$ for all $i$, and
similarly for $w_i$.
Moreover the discussion on comensurability above shows that
there exists a {\em positive} constant
$D_z$ such that $|z_i|^2 \geqslant D_z (1 + |\lambda_i|)$ for all $i$ such that
$z_i \neq 0$. Again we have an analogous statement for $w$.

Recall now that $z_i w_i =0$ if and only if $i \in I$.
We deduce that, subject to our assumptions, 
$\min(|z_i|^2, |w_i|^2)$ is bounded below by a positive constant 
times $(1 + |\lambda_i|)$ for $i \in I^c$.

Now $\tilde{\mu}(y)(y)$ is the sum of an expression that grows at most linearly, 
and a term bounded below by a positive constant times
$\sum_{i \in I^c} (1 + |\lambda_i|) t_i^2$, which is greater than or equal to a positive constant 
times $|| y ||^2$ as, from above, the $\alpha_i$ span $\n^*$.

\medskip
(iii) To show $\tilde{\mu}$ maps bounded sets to bounded sets, we need to check that 
$\tilde{\mu}(y) (\xi)$ can be bounded in terms of $y$ and linearly in $|| \xi ||$
for $y, \xi \in \n$.
A similar calculation to that above gives that
\begin{equation} \label{muyxi}
\tilde{\mu}(y)(\xi) = \tilde{\mu}(0) (\xi) + 
\frac{1}{2}\sum_{i \in \bL} (|z_i|^2 (e^{2y_i}-1) - |w_i|^2 (e^{-2y_i}-1)) 
\alpha_i(\xi)
\end{equation}
The first term is bounded linearly in terms of $|| \xi ||$.

A bound on $y$ gives a bound, uniform in $i$, on the $y_i$ (since
$|\lambda_i| \rightarrow \infty$ as $i \rightarrow \infty$).
Writing $e^t -1 = t h(t)$ where $h(t)= e^t \sinh(t)/t$ is continuous, we see that, for bounded
$|| y ||$, we have
$|z_i|^2|e^{2y_i}-1| \leqslant C |z_i|^2 |y_i|$ for a constant $C$ independent of $i$,
and we similarly bound $|w_i|^2(e^{-2y_i}-1)$.

Now $\sum_{i \in \bL} |z_i|^2 (e^{2y_i}-1) \alpha_i(\xi)$
is bounded by 
\[
C \sum  |z_i|^2 |y_i| |\alpha_i(\xi)| \leq
C \left( \sum |z_i|^2 y_i^2 \right)^{\frac{1}{2}} 
\left( \sum |z_i|^2 (\alpha_i(\xi))^2 \right)^{\frac{1}{2}}
\]
As we have a constant $B$ independent of $i$ such that
$|z_i|^2, |w_i|^2 \leqslant B (1 + |\lambda_i|)$, we see finally the second and third
terms in (\ref{muyxi}) are bounded by a constant depending on $(z,w)$
and the bound on  $|| y ||$, times $|| \xi ||$, thus concluding the
argument.

\medskip
(iv) So, subject to our hypotheses, we
have that the $N^\C$-orbit through $(z,w)$ meets each level set
of $\mu^{\bR}$. Standard arguments (see eg Prop 5.1 of \cite{Goto:A-infinity})
show that in fact the $N^\C$-orbit meets the the level set in an $N$-orbit.

So, we can identify the
hyperk\"ahler quotient with the $N^\C$-quotient of the zero locus of the
 complex equation (\ref{betacomplex}).

\medskip
We now deduce that $M(\lambda^1, \lambda^2, \lambda^3)$ is diffeomorphic to
$M(\lambda^1, \lambda^2, 0)$ and (rotating complex structures) with
$M(\lambda^1, 0, 0)$. We shall analyse the topology of this space in the
next section.

\begin{remark}
  Considering $M(\lambda^1, 0, 0)$) means we take $\Lambda_z$ or
  $\Lambda_w = 0$, so the $\lambda_k$ are pure multiples of $i$. 
  We now have a holomorphic (but not triholomorphic)
  circle action on the hyperkahler quotient, induced by rotation in
  the $z$ (resp. $w$) factor. For, as this rotation fixes $\Lambda$ it
  induces an action on $M_\Lambda = \Lambda + L^2 (\bH)$ and hence
  on the hyperk\"ahler quotient, since
  $ \lambda_k^\C = \lambda_k^2 + i\lambda_k^3=0$.  In terms of
  equations (\ref{betareal}) and (\ref{betacomplex}), $a_{\bR}$
  remains unchanged while $a_{\bC}$ is rotated. The fixed point set of
  this action is therefore contained in $\phi_{\bC}^{-1}(0)$, and is a
  union of toric varieties contained in the locus $z_k w_k =0$.
\end{remark}

\section{Topology}

We can analyse the topology using similar methods to the finite case.
In view of the above discussion,
let us take $\lambda_k = (\lambda_k^\bR,0,0)$, that is, to have zero
complex part.

First recall the homeomorphism
$\tau : \bR_{\geq 0} \times \bR_{\geq 0} \rightarrow \R \times
\bR_{\geq 0}$ defined by
\begin{equation} \label{tau} (x,y) \mapsto (\frac{1}{2}(x^2-y^2), xy)
\end{equation}
whose inverse is
\[
  \tau^{-1} : (p,q) \mapsto (+ \sqrt{p + \sqrt{p^2 + q^2}}, + \sqrt{
  -p + \sqrt{p^2 + q^2}})
\]
The deformation $j_t : (p,q) \mapsto (p,tq)$ of
$\bR \times \bR_{\geq 0}$ now induces a deformation
$\jmath_t = \tau^{-1} \circ j_t \circ \tau$ of
$\bR_{\geq 0} \times \bR_{\geq 0}$. We shall take the deformation
parameter $t$ to lie in $[0,1]$. 

Explicitly, 
\begin{equation} \label{j1}
\jmath_t^{1}(x,y) = \sqrt{ \frac{1}{2}(x^2-y^2) + \frac{1}{2}
\sqrt{(x^2 - y^2)^2 + 4t^2 x^2 y^2}}
\end{equation}
\begin{equation} \label{j2}
\jmath_t^{2}(x,y) = \sqrt{ \frac{1}{2}(y^2-x^2) + \frac{1}{2}
\sqrt{(x^2 - y^2)^2 + 4t^2 x^2 y^2}}
\end{equation}
Note that $\jmath_1$ is the identity,
while $\jmath_0$ sends $\bR_{\geq 0} \times \bR_{\geq 0}$ into the
union of the non-negative $x$ and $y$ axes. 

Writing
$\jmath_t(x,y) = (\jmath_t^{1}(x,y), \jmath_t^{2}(x,y))$, we obtain a
deformation $h_t$ of $\bH = \bC \times \bC$ given by
\[
  h_t (z,w) = \left( \jmath_t^{1}(|z|,|w|)\frac{z}{|z|},
    \jmath_t^{2}(|z|,|w|)\frac{w}{|w|} \right)
\]
This can now be extended diagonally to a $T^{\bL}$-equivariant
deformation $h_t$ of $\bH^{\bL}$ with $h_1$ equal to the identity.

Note that
$(\jmath_t^{1}(x,y))^2 + (\jmath_t^{2}(x,y))^2 \leq x^2 + y^2$ for
$0 \leqslant t \leqslant 1$ so this actually induces a deformation of
$\bL^2 (\bH)$. Moroever, as $(\jmath_t^{1}(x,y))^2 - (\jmath_t^{2}(x,y))^2 
= x^2 - y^2$, we deduce that $(\jmath_t^{1}(x,y))^2 \leqslant x^2$ and
$(\jmath_t^{2}(x,y))^2 \leqslant y^2$.

Now $| h_t(z,w) - (z,w) |^2 = (\jmath_t^{1}(x,y) - x)^ 2 +
(\jmath_t^{2}(x,y)-y)^2$, where $x = |z|$ and $y= |w|$.

The second term is bounded by $4y^2$. The first term is $O(1/x^2)$
for $x$ large in comparison with $y$.

In our situation (as we are taking $\lambda_k = (\lambda_k^\bR,0,0)$,
we can take $\Lambda = (\Lambda^{(1)}, 0)$. Now let us consider
$(z_k,w_k)_{k\ \in \bL} \in M_{\Lambda}$, so $(z_k,w_k) = (\Lambda_k + \hat{z}_k, w_k)$
where $(\hat{z}_k)$ and $(w_k)$ are square summable sequences. 
Moroever $|z_k|^2 = O(\lambda_k)$, so, 
since $\sum_{k} \frac{1}{\lambda_k} < \infty$
the above calculations show $(h_t(z_k,w_k) - (z_k,w_k))$ is square-summable.
So if $(z,w) - \Lambda \in L^2(\bH)$ then $h_t(z,w) - \Lambda \in L^2(\bH)$
also.

So $h_t$ in fact gives a deformation of $M_{\Lambda} = \Lambda + L^2(\bH)$. 
The equivariance implies that
this induces a deformation of the hyperk\"ahler quotient
$M = M(\beta, \lambda)$.

As
\[
  \tau \circ \jmath_t(x,y) = j_t \circ \tau(x,y) =
  (\frac{1}{2}(x^2-y^2), txy)
\]
we see that the real component $\mu^\bR$ of our moment map is invariant
under the deformation, while the complex component $\mu^\bC =\mu_2 + i \mu_3$
scales by $t$.  Passing to the hyperk\"ahler quotient $M$, similar
statements apply to the moment map $\phi$ for the $T^n$-action on $M$.

Now the family $h_t \; (0 \leq t \leq 1)$ deforms $M$ to
$\phi_{\bC}^{-1}(0)$, i.e. the locus where $a_{\bC}=0$.  
As $\lambda_k^\bC =0$, points
$(z,w)$ of this locus satisfy
\[
  z_k w_k =0.
\]
Moreover, from the moment map equation (\ref{betareal}), we have
\[
   a_{\bR}(u_k)  =\lambda_k^\bR + \frac{1}{2} (|z_k|^2 -
  |w_k|^2)
\]
So if $a_{\bR}$ is on the positive side of the hyperplane
$H_{k, \bR} = \{ x :  x(u_k)  = \lambda_k^\bR \}$ in
$\bR^n$, then $w_k =0$ and
$|z_k|^2 = 2(a_{\bR}(u_k)  - \lambda_k^\bR)$.  If $a_{\bR}$ is on
the negative side of the hyperplane then $z_k=0$ and
$|w_k|^2 = -2(a_{\bR}( u_k)  - \lambda_k^\bR)$.  So
$\phi_{\bC}^{-1}(0)$ is a union of the (in general compact and
non-compact) toric varieties corresponding to the (in general bounded
and unbounded) polyhedra in $\bR^n$ defined by the arrangement of
hyperplanes $H_{k, \bR}$.  Now as in \cite{BD} we
can retract $\phi_{\bC}^{-1}(0)$ onto the union of compact toric
varieties corresponding to the bounded polytopes.

In summary, the hypertoric variety has the homotopy type of a union of
(in general infinitely many) compact toric varieties intersecting in
toric subvarieties.

These toric varieties are K\"ahler submanifolds of $M$ with respect to
the complex structure $\sf I$, because the condition $\phi_{\bC}=0$
(i.e.\ $z_k w_k=0$ for all $k$ ) is $\sf I$-holomorphic.  As the
$\sf I$-holomorphic complex symplectic form
$\omega_{\bC}=\omega_{\sf J} + i \omega_{\sf K}$ is induced from the form
$dz \wedge dw$ on $\bH$, we also see that these toric varieties are
complex Lagrangian with respect to $\omega_{\bC}$.

\begin{example}
  Let us revisit the example due to Goto \cite{Goto}.  Now the array of
  hyperplanes is obtained from that defining a simplex by adding
  infinitely many translates of the hyperplane defining one face. We
  obtain a picture where the bounded chambers are a pair of simplices
  and an infinite collection of simplices truncated at one vertex. The
  corresponding compact toric varieties are a pair of complex
  projective spaces $\bC \bP^n$ and an infinite collection of the
  blow-ups of $\bC \bP^n$ at one point. Each toric variety intersects
  the next one in a $\bC \bP^{n-1}$ (corresponding to the
  $(n-1)$-simplex where the associated chambers meet), except that the
  two projective spaces meet in a point, because the associated
  chambers just meet in a point.
  We illustrate this for $n=2$ in Figure \ref{fig:inf-config}.
  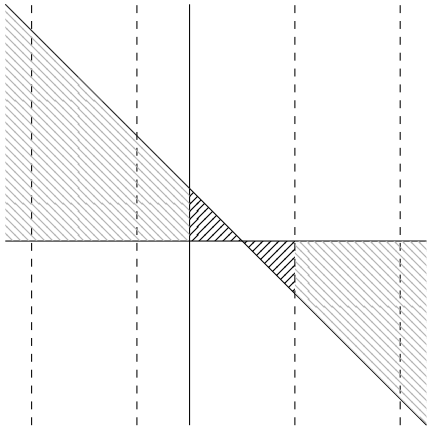
\begin{figure}
    \centering
    \begin{tikzpicture}[scale=.7]
      \newcommand{\ca}{-3.5}
      \newcommand{\cb}{4.5}
      \draw (\ca,0)--(\cb,0);
      \draw (0,\ca)--(0,\cb);
      \draw (\ca,\cb)--(\cb,\ca);
      \foreach \x in {-3,-1,2,4} {
      \draw[dashed] (\x,\ca)--(\x,\cb); };
      \fill[pattern=north east lines] (0,0) -- (0,1) -- (1,0) --
      cycle;
      \fill[pattern=north east lines] (1,0) -- (2,0) -- (2,-1) --
      cycle;
      \fill[pattern=north west lines,pattern color=gray!60] (\ca,0)
      -- (0,0) -- (0,1) -- (\ca,\cb) -- cycle;
      \fill[pattern=north west lines,pattern color=gray!60] (2,0) --
      (\cb,0) -- (\cb,\ca) -- (2,-1) -- cycle;
    \end{tikzpicture}
    \caption{Singly infinite configuration in two dimensions}
    \label{fig:inf-config}
  \end{figure}

  In the two-dimensional case, we can also have doubly or triply
  infinite configurations, see Figure \ref{fig:dt-inf}.
  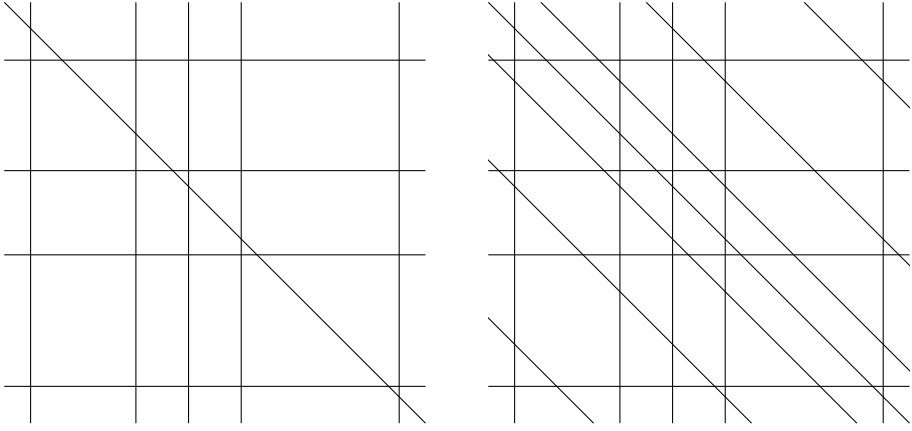
\begin{figure}
    \centering
    \begin{tikzpicture}[scale=.7]
      \newcommand{\ca}{-3.5}
      \newcommand{\cb}{4.5}
      \draw (\ca,\cb)--(\cb,\ca);
      \foreach \x in {-3,-1,0,1,4} {
      \draw (\x,\ca)--(\x,\cb); };
      \foreach \y in {-2.8,-0.3, 1.3, 3.4} {
      \draw (\ca,\y)--(\cb,\y); };
    \end{tikzpicture}
    \qquad
    \begin{tikzpicture}[scale=.7]
      \newcommand{\ca}{-3.5}
      \newcommand{\cb}{4.5}
      \foreach \x in {-3,-1,0,1,4} {
      \draw (\x,\ca)--(\x,\cb); };
      \foreach \y in {-2.8,-0.3, 1.3, 3.4} {
      \draw (\ca,\y)--(\cb,\y); };
      \clip (\ca,\ca) rectangle (\cb,\cb);
      \foreach \z in {-6,-3,-1,0,1,3,6} {
      \draw[xshift=\z cm] (\ca,\cb)--(\cb,\ca); };
    \end{tikzpicture}
    \caption{Doubly and triply infinite configurations in two
    dimensions}
    \label{fig:dt-inf}
  \end{figure}
\end{example}

  %\begin{center}
  %  \begin{tikzpicture}
  %    \draw (-2.5,0)--(3.5,0);
  %    \draw (0,-2.5)--(0,3.5);
  %    \draw (-2.5,3.5)--(3.5,-2.5);
  %    \foreach \x in {-2,-1,2,3} {
  %    \draw[dashed] (\x,-2.5)--(\x,3.5); };
  %    \fill[pattern=north east lines] (0,0) -- (0,1) -- (1,0) --
  %    cycle;
  %    \fill[pattern=north east lines] (1,0) -- (2,0) -- (2,-1) --
  %    cycle;
  %    \fill[pattern=north west lines,pattern color=gray!60] (-2.5,0)
  %    -- (0,0) -- (0,1) -- (-2.5,3.5) -- cycle;
  %    \fill[pattern=north west lines,pattern color=gray!60] (2,0) --
  %    (3.5,0) -- (3.5,-2.5) -- (2,-1) -- cycle;
%    \end{tikzpicture}
%  \end{center}
%\end{example}

\section{The hypertoric metric}
\label{sec:class-hypert-manif}

Let us now describe the metric on $M$, using the notation of
\cite{BD}.
Let $\phi : M \rightarrow \iH \otimes (\bR^n)^* \cong \bR^{3n}$ denote the moment map and let $U$ be an open subset of $\iH \otimes (\bR^n)^*$ such that
\( T^n \) acts freely on \( \phi^{-1}(U) \).
The hyperk\"ahler structure on 
\( \phi^{-1}(U) \) is uniquely determined via a \emph{polyharmonic} function \( F \)
on \( U \subset \iH\otimes (\bR^n)^* \). (A function is
polyharmonic if it is harmonic on each affine
subspace \( c + \iH \otimes \bR\alpha \),
\( \alpha \in (\bR^n)^*\setminus\Set0 \)).
 
Explicitly, we write the base $\bR^{3n}$ as $\bR^n \times \bC^n$ and use
$(a,b)$ as coordinates on $\bR^n \times \bC^n$. As we are identifying
$\bR^n$ with its dual, expressions like $a(u_k)$ will be written
$\langle a, u_k \rangle$ etc.

 The hyperKähler metric is of the form
\begin{equation} \label{GH}
g = \sum_{i,j} [ \Phi_{ij} (da_i da_j + db_i d \bar{b}_j) +
(\Phi^{-1})_{ij} (dy_i - A_i)(dy_j - A_j) ]
\end{equation}
for suitable fibre coordinates $y_i$ on the $T^n$ fibres,
 and where $A$ is the connexion form
on the $T^n$-bundle. The pair $(A, \Phi)$ satisfy the generalised monopole
equations of Pedersen and Poon \cite{Pedersen-Poon:Bogomolny}. $\Phi$
is often referred to as the {\em potential} for the metric (if $n=1$
of course our set-up reduces to the Gibbons-Hawking ansatz and
$\Phi$ satisfies the equation $\nabla \Phi = {\rm curl} \; A$).

Moreover the monopole may be described in terms of a {\em prepotential} function
$F$ via the equations
\begin{equation}
(\Phi_{ij}, A_j) = \left( F_{a_i a_j}, \frac{1}{2} 
\sum_{k} \sqrt{-1} (F_{a_j b_k} db_k-
F_{a_j \bar{b}_k} d \bar{b}_k ) \right).
\end{equation}
where $F$ satisfies the polyharmonic equations
$F_{a_i a_j} + F_{b_i \bar{b}_j}=0$. The fiber coordinates $y_i$ may also
be related to $F$ via the formula $dy_i = \bar{\partial}_1 F_{a_i} -
\partial_1 F_{a_i}$ where $\partial_1$ is the Dolbeault operator
corresponding to the choice of complex structure given by the 
 splitting of $\bR^{3n}$ as $\bR^n \times \bC^n$. 

The prepotential
$F$ may be written explicitly as a formal sum as follows. We let
\begin{equation} \label{s}
  s_k = 2 ( \langle a, u_k \rangle - \lambda_k^\bR),
  \end{equation}
\begin{equation} \label{v}
v_k = \langle b,u_k \rangle -  \lambda_k^\bC
\end{equation}
 and define $r_k \geq 0$ by $r_{k}^2 = s_{k}^2 + 4 v_{k}^2$. 
So the distance from
$(a,b)$ to the flat $H_k$ is $r_k / 2 |u_k|$. Now we have
\begin{equation*}
  F = \frac12\sum_{k\in\bL} (s_k\log(s_k+r_k) - r_k).
\end{equation*}
and hence
\begin{equation} \label{Vij}
 \Phi_{ij} = F_{a_i a_j} = \sum_{k \in \bL} \frac{(u_k)_i  (u_k)_j}{r_k}
\end{equation}

Note that our convergence condition (\ref{eq:convergence}) guarantees
convergence of the sum (\ref{Vij}). We also remark that $\Phi$ may be
rewritten as
\[
\sum_{k \in \bL}  \frac{u_k \otimes u_k}{r_k}
\]
Each individual term in the sum is positive semidefinite with kernel
$u_k^\perp$, and as the $u_k$ span $\R^n$, the sum is positive definite
as required.

On the flat $H_k$ we have $s_k = v_k =0$ and hence $r_k=0$, giving a
singularity of $\Phi_{ij}$, but these are just coordinate singularities
corresponding to the fact that the $T^n$ fibres collapse to
lower-dimensional tori over the flats. The metric on $M$ remains
smooth at these points.

If $n=1$, of course, the flats are points in $\R^3$, and our
expression gives the Gibbons-Hawking form of the Anderson-Kronheimer-Lebrun metrics \cite{AndersonMT-KL:infinite}.

\begin{remark}
  The expression $\Phi_{ij} (da_i da_j + db_i d \bar{b}_j)$ gives the
  projected metric on the base $\R^{3n}$ and projection of the
  hypertoric metric to this base is a Riemannian submersion. Similar
  arguments to those in \cite{AndersonMT-KL:infinite} and
  \cite{Goto:A-infinity} show that our hypertoric metrics are
  complete.  For a Cauchy sequence in the hypertoric metric projects
  to a Cauchy sequence in the metric on the base, and comparison with
  the analogous metric with potential $\Phi$ given by a finite sum
  shows that the projected sequence gets trapped in a compact region
  of $\R^{3n}$ and hence converges. As the projection onto $\R^{3n}$
  is proper, this means that the original sequence converges also.
\end{remark}

\begin{remark}
Theorem \ref{classification} stated that a 
 connected hypertoric manifold of
  dimension~\( 4n \) is topologically (up to a product with $(S^1 \times \R^3)$
  factors) a hypertoric manifold of the type constructed
  in this paper, ie. the hyperKähler quotient
  of a flat Hilbert hyperKähler manifold by an Abelian Hilbert Lie
  group.  The hyperKähler metric 
  is either the one induced by the hyperKähler quotient construction 
or a Taub-NUT deformation of this metric.  

In the case of the Taub-NUT deformations the potential is modified
by the addition of extra terms thus:
\begin{equation*}
  F = \sum_{k\in \bL} a_k(s_k\log(s_k+r_k) - r_k) + \sum_{i,j=1}^n
  c_{ij}(4x_ix_j - z_i\overline z_j - z_j\overline z_i)
\end{equation*}
where the \( c_{ij} \) terms give the deformation
of a metric determined by the first sum.
\end{remark}

\section{Periodic examples} \label{sec:periodic}

In order to make the hyperk\"ahler quotient construction work, we
assumed that the $\lambda_k$ grow fast enough with $k$ so that, as in
equation (\ref{eq:convergence}), the sum
$\sum_{k} ( 1 + |\lambda_k|)^{-1}$ is finite.  For example, in
Goto's examples, $\lambda_k$ is taken to grow like $k^2$.  The
$\lambda_k$ of course essentially give the distances of the flats from
the origin.

In the four dimensional case, when the flats are just points in
$\R^3$, Ooguri-Vafa \cite{OV} considered another viewpoint involving
points spaced out at distances growing linearly in $k$--in fact the
points are arranged periodically so the set-up is invariant under $\bZ$
translations. Of course this does not fit into the hyperk\"ahler
quotient picture, but it can be interpreted using the Gibbons-Hawking
form of the metric (\ref{GH}). The points are located at $(k,0,0)$ and
the Ooguri-Vafa potential is now defined to be
\[
  \Phi = \sum_{k} \frac{1}{\sqrt{ (a-k)^2 + b^2 }} - \frac{1}{|k|}.
\]
so the $1/r_{k}$ terms are modified by the subtraction of $1/|k|$ to
ensure convergence. We refer to \cite{Gross-Wilson} for further
background on the Ooguri-Vafa metric. Note that this metric is incomplete
so not included in the classification result Thm. \ref{classification}.

\medskip
We can generalise this idea to the situation in our paper.  
Our potential was
$\Phi_{ij} = \sum_{k=1}^{d} \frac{(u_k)_i
(u_k)_j}{r_k}$
where $r_k = +\sqrt{ s_k^2 + 4 v_k \bar{v}_k }$.

We can now modify this expression in the spirit of Ooguri-Vafa by
changing the $1/r_k$ terms to $1/r_k - 1 /|k|$. We choose the
$\lambda_k$ to have complex part zero.  (Note that the nature of the
singularity of $\Phi_{ij}$ on the flats remains the same, so the 
metric extends smoothly over these points).

%Here, writing the base $\bR^{3n}$ as $\bR^n \times \bC^n$ and using
%$(a,b)$ as coordinates on $\bR^n \times \bC^n$, we have
%$s_k = 2 ( \langle a, u_k \rangle - \lambda_k^1)$ and
%$v_k = \langle b,u_k \rangle - (\lambda_k^2 + i \lambda_k^3)$.  

Recall that there are only finitely many distinct $u_k$, subject to
the assumptions of Proposition \ref{finite}.
We can now arrange that for each $u_k$ the associated $\lambda$'s just
take all possible values in a translate of $\bZ$. So our diagram of
hyperplanes has periodicity--invariant under a group of translations
isomorphic to $\bZ^n$

This is most clearly seen in terms of the prepotential function $F$
introduced in \ref{sec:class-hypert-manif}, satisfying
$\Phi_{ij} = F_{a_i a_j} = - F_{b_i \bar{b}_j}$.  The basic
four-dimensional example is $F_{0} (x,w) = x \log(x+r)-r$ where
$r^2 = x^2 + 4 w \bar{w}$ giving $\Phi = \frac{1}{r}$.  One can obtain
higher dimensional examples by considering $F(a,b) = F_0 (s_k, v_k)$
where $s_k, v_k$ are given by (\ref{s}),(\ref{v}). Note that the second derivatives for
$F$ now scale by $(u_k)_i (u_k)_j$ so $F$ is still polyharmonic. The linearity of the 
equations for $F$ means that we can superpose solutions to get
\[
  F = \sum_{k} F_0 (s_k, v_k) = \sum_{k} s_k \log(s_k + r_k) -r_k
\]
In the finite case this is just formula (8.2) in
\cite{BD}, and on taking second derivatives yields
the formula of Theorem 9.1 in that paper.  In the infinite case this gives the formula (\ref{Vij}) of
the preceding section of the current paper.

But we can also replace $F_0$ by the corresponding prepotential
$F_{OV}$ for the Ooguri-Vafa metric (which need not be known explicitly). This prepotential is now periodic
in the $x$ variable-- $F_{OV}(x+1, w) = F_{OV}(x,w)$. We can now
consider
\[
  \sum_{k} F_{OV} (s_k, v_k)
\]
where $k$ ranges over a finite set of indices in one-to-one
correspondence with the set of {\em distinct} vectors $u_k$. This
prepotential is invariant under $(a,b) \mapsto (a+ u_j, b)$, as the
$u_k$ are all integral, so the metric we obtain is invariant under the
lattice isomorphic to $\bZ^n$ spanned by the vectors $u_j$. An example
of this construction occurs in the physics literature in \cite{KSZ}.

Note that in the original Ooguri-Vafa metric, the complex coordinate $b$ has
to lie in the unit disc for the correct convergence properties to
hold. In our more general set up, we will need $| v_k | < 1$, for all
$k$, which defines a non-empty neighbourhood $U$ of the origin in
$\bC^n$ as there are only finitely many distinct $u_k$.

As in \S \ref{sec:properties} we can consider projection onto the complex 
coordinate $b$ (this is $a_{\C}$ in the notation of \S \ref{sec:properties}).
If $b$ is in the complement of the complex flats then
the fibre
is a copy of $T^n \times \R^n$.
We can then quotient by $\bZ^n$ and get a picture where the quotient
space fibres over $U \subset \bC^n$ with generic fibres tori $T^{2n}$. 
If $b$ lies in a complex flat, the fibre is obtained from
a $2n$-torus by collapsing some $T^n$ fibres to lower-dimensional tori
over the subtori of the base $T^n$ that are obtained by quotienting 
the real hyperplanes $\langle a_{\bR}, u_k \rangle = \lambda_k^1$ in $\bR^n$.

This
generalises 
the $n=1$ Ooguri-Vafa case where one can quotient by $\bZ$ and get an elliptic
fibration over the complex disc with singular fibre a nodal cubic 
 over the origin. The nodal cubic is of course obtained by pinching
a single $S^1$ in the elliptic curve to a point.

\begin{acknowledgement}
  It is a pleasure to thank Andrew Swann for many useful discussions,
  Melanie Rupflin for advice on monotone operators and Michael
  Thaddeus for suggesting that there should be hypertoric analogues of
  the Ooguri-Vafa metric.
\end{acknowledgement}

\end{document}